\documentclass[11pt, reqno]{amsart}
\usepackage{hyperref}
\usepackage{cmap}                        % Ïîääåðæêà ïîèñêà ðóññêèõ ñëîâ â PDF (pdfla\TeX)
\usepackage[cp1251]{inputenc}            % Âûáîð ÿçûêà è êîäèðîâêè
\usepackage[english]{babel}
\usepackage[left=1in,right=1in,top=1in,bottom=1.5in]{geometry} % ïîëÿ ñòðàíèöû
\usepackage{amssymb,amsmath, amsthm, amscd,ifthen}
\usepackage{graphicx}
\usepackage{epigraph, xcolor, hyperref}

\newcommand{\N}{{\mathbb N}}
\newtheorem*{thm*}{Theorem}
\newcommand{\ff}{{\mathcal F}}

\newcommand{\G}{{\mathcal G}}
\newcommand{\h}{{\mathcal H}}

\newcommand{\D}{{\mathcal D}}

\newtheorem*{cla*}{Claim}

\newtheorem{thm}{Theorem}

\newtheorem{prb}{Problem}

\newtheorem{lem}[thm]{Lemma}
\newtheorem{cla}[thm]{Claim}
\newcommand{\eps}{{\varepsilon}}

\newtheorem{cor}[thm]{Corollary}
\date{}

\newtheorem{prop}[thm]{Proposition}

\newtheorem{defn}[thm]{Definition}

\DeclareMathOperator{\E}{\mathrm E}

\date{}
\title{Maximal degrees in subgraphs of Kneser graphs}
\author{Peter Frankl}\address{R\'enyi Institute, Budapest, Hungary and Moscow Institute of Physics and Technology, Russia; Email: {\tt peter.frankl@gmail.com}}

\author{Andrey Kupavskii}
\address{IAS Princeton, US and
Moscow Institute of Physics and Technology, Russia; Email: {\tt kupavskii@ya.ru}.} %\author{Peter Frankl}\address{R\'enyi Institute, Budapest, Hungary; Email: {\tt peter.frankl@gmail.com}}

%\author{Andrey Kupavskii}
%\address{University of Birmingham and
%Moscow Institute of Physics and Technology; Email: {\tt kupavskii@ya.ru}.} \thanks{The research of the second author was partially supported by the  EPSRC grant no. EP/N019504/1.}
%{\tt andrei.kupavskii@epfl.ch}

\begin{document}
\maketitle
\begin{abstract} In this paper, we study the maximum degree in non-empty induced subgraphs of the Kneser graph $KG(n,k)$. One of the main results asserts that, for $k>k_0$ and $n>64k^2$, %and every positive $\eps$,
whenever a non-empty subgraph has $m\ge k{n-2\choose k-2}$ vertices, its maximum degree is at least $\frac 12(1-\frac {k^2}n) m - {n-2\choose k-2}\ge 0.49 m$. This bound is essentially best possible. One of the intermediate steps is to obtain structural results on non-empty subgraphs with small maximum degree.

%non-independent sets in Kneser graphs. In particular, we show that for any $k\ge k_0$ and such that if $n\ge 32k^{2}$ and a family $\mathcal F$ of $k$-element sets of $\{1,\ldots,n\}$ of size at least $C(\eps) k{n-2\choose k-2}$ is not intersecting, then there is a set $S\in \ff$ that is disjoint with almost a half of sets from $\ff$.
\end{abstract}

\section{Introduction}
Let $n\ge 2k>1$ be positive integers and let $[n]$ denote the standard $n$-element set $\{1,\ldots,n\}$. Set ${[n]\choose k}:=\{F\subset [n]: |F| = k\}$. Let $KG(n,k)$ denote the famous Kneser graph. Its vertex set is ${[n]\choose k}$, and two vertices $F,G\in {[n]\choose k}$ form an edge iff $F\cap G =  \emptyset$.

The interest in Kneser graphs goes back to 1955 when Kneser \cite{Knes} formulated the conjecture that the chromatic number $\chi(KG(n,k))$ equals $n-2k+2$. This conjecture was settled in an influential paper of Lov\'asz \cite{Lova} some twenty years later.

In the meantime, Erd\H os, Ko and Rado \cite{EKR} determined the independence number $\alpha(KG(n,k))$. Note that a family $\ff\subset {[n]\choose k}$ is an independent set in $KG(n,k)$ iff $F\cap G\ne \emptyset $ for all $F,G\in {[n]\choose k}$. In extremal set theory such an $\ff$ is called {\it intersecting}.

\begin{thm}[\cite{EKR}] For $n\ge 2k>0$, \begin{equation}\label{eqekr}
                                         \alpha(KG(n,k)) = {n-1\choose k-1}.
                                       \end{equation}
\end{thm}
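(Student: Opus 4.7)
The lower bound is immediate: for any fixed element $i\in [n]$, the ``star'' family $\{F\in {[n]\choose k}: i\in F\}$ is intersecting and has size exactly ${n-1\choose k-1}$.

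For the upper bound, my plan is to use Katona's cyclic permutation method. Fix a cyclic ordering $\sigma$ of $[n]$ (i.e., an arrangement of $[n]$ around a circle) and call a $k$-subset $A\subset [n]$ an \emph{arc of} $\sigma$ if its elements are consecutive along $\sigma$. The key combinatorial lemma is the following: for any intersecting family $\ff\subset {[n]\choose k}$ and any cyclic ordering $\sigma$, at most $k$ arcs of $\sigma$ belong to $\ff$. To prove this, fix one arc $A_0\in \ff$ (if none exists we are done) and observe that exactly $2k-1$ arcs of $\sigma$ meet $A_0$, namely the $k-1$ arcs obtained by shifting $A_0$ to the left, $A_0$ itself, and the $k-1$ arcs obtained by shifting it to the right. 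These $2k-2$ non-trivial arcs pair up naturally: the arc shifted $j$ positions to the left is disjoint from the arc shifted $k-j$ positions to the right precisely because $n\ge 2k$. From each of the $k-1$ such pairs, $\ff$ can contain at most one arc, which yields the bound $1+(k-1)=k$.

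With the lemma in hand, the proof is finished by double-counting pairs $(\sigma,F)$ with $F\in \ff$ an arc of $\sigma$. There are $(n-1)!$ cyclic orderings of $[n]$, each contributing at most $k$ arcs from $\ff$, so the number of pairs is at most $k(n-1)!$. On the other hand, each fixed $F\in {[n]\choose k}$ appears as an arc in exactly $k!(n-k)!$ cyclic orderings (choose the internal order of $F$ and of its complement). Hence
\[
|\ff|\cdot k!(n-k)! \;\le\; k\cdot (n-1)!,
\]
which rearranges to $|\ff|\le {n-1\choose k-1}$, matching the lower bound.

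The main obstacle is the combinatorial lemma about arcs, and specifically the place where the hypothesis $n\ge 2k$ is used: it is exactly the condition that guarantees the pairing of shifted arcs into disjoint pairs. Everything else is a clean double count. I would expect no serious obstacles beyond correctly verifying this pairing and keeping track of the counting of cyclic orderings.
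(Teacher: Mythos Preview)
Your argument is correct: this is Katona's circle method, and you have executed it cleanly, including the point where $n\ge 2k$ is needed for the pairing of shifted arcs and the double count $|\ff|\cdot k!(n-k)!\le k(n-1)!$.

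As for comparison, the paper does not supply a proof of this theorem at all; it is stated with a citation to \cite{EKR} and used as background. So there is no ``paper's proof'' to match. For context, the original Erd\H os--Ko--Rado argument proceeds by compression (shifting) rather than by averaging over cyclic orders; Katona's approach, which you chose, is shorter and more self-contained, while the shifting method generalizes more readily to $t$-intersecting and other variants. Either route is acceptable here.
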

Equality in \eqref{eqekr} is attained for the star $\mathcal S_x:=\{S\in {[n]\choose k}: x\in S\}$. Hilton and Milner \cite{HM} proved that for $n>2k$ no other independent set (intersecting family) attains the size ${n-1\choose k-1}$. More specifically, they showed that if no element is contained in all sets of an intersecting family, then it is at most as large as $\mathcal H(x,F)$, where for $x\notin F\in {[n]\choose k}$ we define $$\mathcal H(x,F):=\{F\}\cup \{A\in{[n]\choose k}: x\in A, A\cap F\ne \emptyset\}.$$

\begin{defn}
  For a family $\ff\subset {[n]\choose k}$, let $KG(\ff)$ denote the induced subgraph of $KG(n,k)$ on the vertex set $\ff$. Let $e(\ff)$ {\em ($d(\ff)$)} denote the number of edges (maximum degree) of $KG(\ff)$.
\end{defn}

In view of \eqref{eqekr}, for $|\ff|>{n-1\choose k-1}$ both $e(\ff)$ and $d(\ff)$ are positive. Defining $\mathcal S^+:= \mathcal S_x\cup \{T\},$ where $x\notin T\in {[n]\choose k},$ one easily verifies $$e(\mathcal S^+) = d(\mathcal S^+) = {n-k-1\choose k-1}.$$

Katona, Katona, and Katona \cite{KKK} proved
\begin{equation}\label{eqkat}
  e(\ff)\ge {n-k-1\choose k-1}
\end{equation}
for all $\ff\subset {[n]\choose k}$ with $|\ff| = {n-1\choose k-1}+1$. This result was extended to the case
$|\ff| \le  {n-1\choose k-1}+\frac {n-2k}n{n-k-1\choose k-1}$ by Balogh et. al. \cite{Bal} (see Theorem~\ref{thmde} below).

Very recently, Friedgut (personal communication) raised the problem of determining the minimum of $d(\ff)$ for $|\ff| = {n-1\choose k-1}+1$. One of the motivations for this question is a recent breakthrough result of Huang \cite{Hua}, who showed that in any subset of the hypercube $\{0,1\}^n$ of size $2^{n-1}+1$ the maximum degree of a vertex (in the standard hypercube graph) is at least $\sqrt n$. This settled an old problem from Theoretical Computer Science, known as the Sensitivity Conjecture.

Getting back to Kneser graphs, let us state this problem in a more general form.

\begin{prb}
Define $$d(m,n,k):= \min \Big\{d(\ff): \ff\subset {[n]\choose k}, |\ff| = m \text{ and } \ff \text{ is not intersecting}\Big\}.$$
Determine or estimate $d(m,n,k)$.
\end{prb}
We should note that $KG(2k,k)$ is a perfect matching. Thus $d(m,2k,k) = 1$ identically for $2\le m\le {2k\choose k}$.

{\bf Example. } Consider the family $\D:= \mathcal H(x,F)\cup \{F'\}$, where $F'$ contains $x$ and is disjoint from $F$. Obviously, $|\D| = {n-1\choose k-1}-{n-k-1\choose k-1}+2$, $e(\D) = d(\D) = 1$. This example shows that the problem of estimating $d(m,n,k)$ is only interesting for  $m>{n-1\choose k-1}-{n-k-1\choose k-1}+2$. Note that, for $k\ge 4$, we have $|\D|\ge k{n-k\choose k-2}$.

\begin{defn}
  The {\it lexicographic order} $A<_L B$ is defined for distinct $A,B\in {[n]\choose k}$ by $A<_L B$ iff $\min \{x:x\in A\setminus B\}<\min \{y: y\in B\setminus A\}$. let $\mathcal L(m):= \mathcal L(m,[n],k)$ denote the family of the first $m$ members in ${[n]\choose k}$ in the lexicographic order. Note that if $1\le \ell \le n-k$ is an integer then $\mathcal L({n\choose k}-{n-\ell\choose k}) = \{A\in {[n]\choose k}: A\cap [\ell]  \ne \emptyset\}$.
\end{defn}
For a wide range of the values of the parameters Das, Gan, and Sudakov \cite{DGS} proved that $e(\ff)\ge e(\mathcal L(|\ff|)$. Later, their results were extended to another range by Balogh et. al. \cite{Bal}.
\begin{thm}[\cite{DGS}, \cite{Bal}]\label{thmde}
  Suppose that $m,n,k,\ell$ are positive integers, $m\le {n\choose k}-{m-\ell \choose k}$ and $n\ge 108 \ell k^2(k+\ell)$ {\em (\cite{DGS})} or $n\ge Ck^2\ell^3$ with some absolute $C$ {\em (\cite{Bal})}. Then
  \begin{equation}\label{eqdisj}
    e(\ff)\ge e(\mathcal L(m)) \ \ \ \ \ \text{for all } \ff\subset {[n]\choose k}\text{ satisfying } |\ff| = m.
  \end{equation}
The same holds for any $n>2k$ and $|\ff| \le  {n-1\choose k-1}+\frac {n-2k}n{n-k-1\choose k-1}$ {\em (\cite{Bal})}.
\end{thm}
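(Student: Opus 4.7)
The proof naturally proceeds by shifting followed by induction on $n$. First, one observes that if $i<j$, the shift operator $\s_{ij}$ preserves $|\ff|$ and does not increase $e(\ff)$: any disjoint pair $(F,G)$ created by shifting can be bijectively matched to a disjoint pair that is destroyed, a standard exchange argument. Thus it suffices to prove the inequality for left-compressed (shifted) families; since $\mathcal L(m)$ is itself shifted, the goal is to show that among all shifted families of the given size, the lex-initial segment minimizes $e(\cdot)$.

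Next, split $\ff = \ff_1 \sqcup \ff_{\bar 1}$ according to whether a set contains $1$, and write
$$e(\ff) = e(\ff_{\bar 1}) + D(\ff_1, \ff_{\bar 1}),$$
where $D$ counts cross disjoint pairs. Shifting forces $\ff_1$ to be ``denser at $1$'' than $\ff_{\bar 1}$, while $\ff_{\bar 1}\subset {[2,n]\choose k}$. I would then induct on $n$: apply the inductive hypothesis to $\ff_{\bar 1}$ viewed as a family in ${[2,n]\choose k}$ with parameter $\ell-1$, and bound the cross term $D$ by a Kruskal--Katona-style estimate on the trace $\{F\setminus\{1\}: F\in \ff_1\}$. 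For each $G\in \ff_{\bar 1}$, the $\ff_1$-neighbors disjoint from $G$ correspond to $(k-1)$-subsets of $[2,n]\setminus G$ lying in that trace, and Kruskal--Katona yields a sharp lower bound on their count in terms of $|\ff_1|$.

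The analysis then splits according to whether $|\ff_1|$ is close to the full ``star'' value ${n-1\choose k-1}$ or far from it. If $\ff_1$ is essentially a full star at $1$, the cross-term $D$ dominates and can be compared directly to the corresponding count in $\mathcal L(m)$, which itself decomposes as the full star at $1$ together with a lex-initial segment in ${[2,n]\choose k}$ --- matching the recursive structure. If $\ff_1$ is small, the induction hypothesis on $\ff_{\bar 1}$ carries most of the weight and the cross-count is a modest additive correction that can be absorbed.

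The main obstacle is the intermediate regime in which $|\ff_1|$ is neither close to ${n-1\choose k-1}$ nor negligible: here one must compare a two-variable convex combination of extremal profiles against the lex family via a delicate exchange argument showing that moving mass from $\ff_{\bar 1}$ into $\ff_1$ cannot decrease $e(\cdot)$. The quantitative condition $n \ge Ck^2\ell^3$ arises precisely at this step, as it guarantees that the combinatorial error terms from Kruskal--Katona and from the induction remain negligible compared to the gap between $\mathcal L(m)$ and its competitors; the weaker condition $n \ge 108\ell k^2(k+\ell)$ in \cite{DGS} reflects a cruder version of the same estimate.
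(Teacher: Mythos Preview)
This theorem is not proved in the present paper at all: it is stated with attribution to \cite{DGS} and \cite{Bal} as background for the maximum-degree problem, and the paper gives no argument for it. There is therefore nothing in the paper to compare your proposal against.

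As an independent matter, your sketch is a plausible high-level outline of the approach actually taken in \cite{DGS} (compression to shifted families, decomposition by the element $1$, induction on $n$, Kruskal--Katona control of the cross term). However, it is only a sketch: the ``delicate exchange argument'' in the intermediate regime is exactly where all the work lies, and you have not indicated any concrete mechanism for it beyond asserting that the quantitative hypothesis on $n$ makes error terms negligible. In particular, the claim that the condition $n\ge Ck^2\ell^3$ from \cite{Bal} is merely a sharpening of the same estimate is misleading --- that paper uses a rather different stability-plus-removal strategy, not a refinement of the DGS induction. If you intend to supply a proof, you would need to either reproduce the full induction/exchange analysis of \cite{DGS} or the stability argument of \cite{Bal}; neither is captured by what you have written.
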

More generally, the number of pairs of sets with intersection at most $t$ was studied in \cite{FKR}.\\

Note the obvious relationship
\begin{equation*}\label{eqdegedge}
  d(\ff)\ge 2 e(\ff)/|\ff|.
\end{equation*}

For $1\le i<k$ and a set $P\in {[n]\choose i},$ we use the standard notation $\ff(P):=\{F\setminus P: P\subset F\in \ff\}$ and $\ff(\bar P):=\{F\in \ff: F\cap P = \emptyset\}$.

With this notation, $d(\ff) = \max\{|\ff(\bar F)|: F\in\ff\}$. Recall that, for a family $\ff$, its {\it covering  number} $\tau(\ff)$ is the minimum size of $S\subset [n]$ such that $\ff(\bar S) = \emptyset$.\\

The structure of the extremal examples that minimize $d(\ff)$, although related to the families $\mathcal L(m)$, appears to be significantly more complicated.
Our first two results are structural and apply in a more general setting.

\begin{thm}\label{thmnew1} Fix an integer $t\ge 2$ and let $k_0$ be sufficiently large.  If $k\ge k_0$ and $n\ge 16t^2k^2$ then the following holds.  Let $\ff\subset {[n]\choose k}$ %be non-intersecting and
satisfy $|\ff|\ge |\D|$. % {n\choose k}-{n-2\choose k}$.
  Then either $d(\ff)\ge \big(1-\frac 1t\big)|\ff|$ or %, for any family $\ff$ that minimizes $d(\ff)$ for a fixed $|\ff|$,
there exists a set $S$ of size at most  $t$ such that $|\ff(\bar S)|\le {n-4\choose k-4}$. Moreover, for each $x$ in $S$, we have %if $|S| = 2$ then
$|\ff(x)|\ge k^{-1/2}|\ff|$.% for both $x\in S$. % must have covering number $2$.
\end{thm}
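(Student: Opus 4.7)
The plan is to argue by contrapositive: assume $d(\ff) < (1-1/t)|\ff|$ and construct the required $S$ by a greedy iteration. Initialize $\ff^{(0)} := \ff$ and $S^{(0)} := \emptyset$; at step $i \ge 1$, stop and output $S := S^{(i-1)}$ if $|\ff^{(i-1)}| \le \binom{n-4}{k-4}$, otherwise let $x_i$ be an element maximizing $|\ff^{(i-1)}(x)|$, set $S^{(i)} := S^{(i-1)} \cup \{x_i\}$, and $\ff^{(i)} := \ff^{(i-1)}(\overline{\{x_i\}})$. The proof then reduces to two claims: (a) every $x_i$ admitted into $S$ satisfies $|\ff(x_i)| \ge k^{-1/2}|\ff|$, and (b) the procedure halts within $t$ steps.

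For (a), since $\ff^{(i-1)}(x_i) \subseteq \ff(x_i)$, it suffices to prove the following \emph{popular-element lemma}: if $|\ff^{(i-1)}| > \binom{n-4}{k-4}$, some $x$ has $|\ff^{(i-1)}(x)| \ge k^{-1/2}|\ff|$. Pick any $F_0 \in \ff^{(i-1)}$; the assumption $d(\ff)<(1-1/t)|\ff|$ forces $F_0$ to meet more than $|\ff|/t$ members of $\ff$. Split these intersections according to whether the intersecting set lies in $\ff^{(i-1)}$ or in $\ff \setminus \ff^{(i-1)}$; in the latter case the set contains some $x_j$ with $j<i$, and the total count of such sets meeting $F_0$ is bounded by $\sum_{j<i}\sum_{y\in F_0}|\ff(\{x_j,y\})| \le tk\binom{n-2}{k-2}$. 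Pigeonholing the remaining intersections over the $k$ elements of $F_0$ then produces the desired element. The quantitative conditions $n \ge 16t^2k^2$ and $|\ff| \ge |\D| \approx k\binom{n-2}{k-2}$ must be used essentially to boost the pigeonhole output from the naive bound $|\ff|/(kt)$ up to the claimed $k^{-1/2}|\ff|$.

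For (b), suppose the procedure runs through step $t+1$, so $|\ff^{(t)}| > \binom{n-4}{k-4}$. Take $F_0 \in \ff^{(t)}$: it is disjoint from $S^{(t)} = \{x_1,\ldots,x_t\}$. The same splitting as in (a) bounds the sets in $\ff \setminus \ff^{(t)}$ meeting $F_0$ by $tk\binom{n-2}{k-2}$, while the sets of $\ff^{(t)}$ meeting $F_0$ are bounded by $k\max_y|\ff^{(t)}(y)|$. Using that the greedy at step $t$ would have picked a popular element exceeding the threshold, a careful accounting shows that $F_0$ actually meets fewer than $|\ff|/t$ sets in $\ff$, contradicting the standing assumption $d(\ff) < (1-1/t)|\ff|$.

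The main obstacle is the quantitative popular-element lemma: the crude pigeonhole yields only $|\ff|/(kt)$, which is weaker than the target $k^{-1/2}|\ff|$ by a factor of order $\sqrt{k}/t$ for bounded $t$. Closing this gap is where the interplay between $|\ff|\ge |\D|$ and $n \ge 16t^2 k^2$ is delicate — likely through a combined $L^2$-moment estimate $\sum_x|\ff(x)|^2 > |\ff|^2/t$ together with a concentration argument ruling out a too-uniform distribution of element-degrees on $\ff^{(i-1)}$. The rest of the proof is bookkeeping around this core inequality.
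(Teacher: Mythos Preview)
Your greedy scheme looks natural, but both (a) and (b) have real gaps that the proposed $L^2$ fix will not close.  For (a), the popular-element lemma as you state it is literally false at the small end: you assume only $|\ff^{(i-1)}|>\binom{n-4}{k-4}$, yet under the standing hypotheses $k^{-1/2}|\ff|\ge k^{1/2}\binom{n-k}{k-2}\gg\binom{n-4}{k-4}$, so when $|\ff^{(i-1)}|$ is close to the threshold no element can have $\ff^{(i-1)}$-degree as large as $k^{-1/2}|\ff|$.  More damagingly, even the ``crude'' bound $|\ff|/(kt)$ does not come out of your splitting: for $|\ff|$ near $|\D|\sim k\binom{n-2}{k-2}$ the subtracted term $(i-1)k\binom{n-2}{k-2}$ is already of order $|\ff|$ and swallows the $|\ff|/t$ you start from, leaving nothing to pigeonhole on.  The identical arithmetic kills your argument for (b): bounding the sets of $\ff\setminus\ff^{(t)}$ meeting a single $F_0\in\ff^{(t)}$ by $tk\binom{n-2}{k-2}$ is useless since this is of order $t|\ff|\gg|\ff|/t$, and you have no control on $\max_y|\ff^{(t)}(y)|$ from the greedy choice.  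So neither claim is ``bookkeeping''; both founder on the same obstruction, namely that a \emph{single} probe set $F_0$ can legitimately meet order $|\ff|$ members of~$\ff$.

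The paper circumvents the single-probe barrier by using many probes at once.  The engine is an almost-sunflower lemma (proved by a one-line random subsample): any $\G\subset\binom{[n]}{k}$ with $|\G|\ge\binom{n-4}{k-4}$ contains $\Theta(t\log k)$ sets whose pairwise intersections have size at most $\log k$.  A transversal count then shows that only a negligible fraction of $k$-sets can hit a $1/(2t)$-fraction of such an almost-disjoint collection, and double counting yields one member meeting at most a $1/t$-fraction of~$\ff$.  Applied with $S$ equal to the set of all elements of degree $\ge|\ff|/(100(t\log k)^3)$ (of which there are at most $(10t\log k)^3$, by inclusion--exclusion), this forces $|\ff(\bar S)|\le\binom{n-4}{k-4}$.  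A second, slightly refined pass of the same argument inside $\ff(\overline{S\setminus\{i\}})$, for $i\in S$ of minimum degree, then shows $|S|\le t$.  The ``Moreover'' clause is free: every $x\in S$ has $|\ff(x)|\ge|\ff|/(100(t\log k)^3)\ge k^{-1/2}|\ff|$ once $k$ is large enough that $(10t\log k)^7\le k$.
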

{\bf Remark. } It is sufficient to take $k_0$ such that $(10t\log k_0)^7\le k_0$.
Here and in what follows, all $\log$'s are base $2$.\\

%We note that ${n-4\choose k-4}$ above may be replaced by ${n-c\choose k-c}$ for any fixed constant $c$, provided that $32$ is replaced by a constant $C = C(c)$ and $k\ge k_0(c)$.
For somewhat large $\ff$, we can get rid of $\ff(\bar S)$ completely.
\begin{thm}
Fix and integer $t\ge 2$ and let $k_0$ be sufficiently large.  Let further $k\ge k_0$ and $n\ge 16t^2k^2$.  If  $\ff\subset {[n]\choose k}$ and $|\ff|\ge 2t^2k{n-2\choose k-2}$, then $d(\ff)\ge  \big(1-\frac 1t\big)|\ff|$ unless $\ff(\bar S) = \emptyset$, where $S$ is as in Theorem~\ref{thmnew1}. In particular, if $d(\ff)< \big(1-\frac 1t\big)|\ff|$ then $\tau(\ff)\le t$.
\end{thm}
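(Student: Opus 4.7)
The plan is to derive this theorem from Theorem~\ref{thmnew1} by showing that, under the stronger lower bound $|\ff| \ge 2t^2 k \binom{n-2}{k-2}$, the set $\ff(\bar S)$ cannot merely be small but must in fact be empty. Assume that $d(\ff) < (1 - 1/t)|\ff|$, otherwise there is nothing to prove. Apply Theorem~\ref{thmnew1}: its size hypothesis $|\ff| \ge |\D|$ is implied by ours, since a quick estimate (union bound for counting $(k-1)$-subsets of $[n-1]$ meeting a fixed $k$-set) gives $|\D| \le 2k\binom{n-2}{k-2}$. We thereby obtain a set $S$ with $|S| \le t$ and $|\ff(\bar S)| \le \binom{n-4}{k-4}$.

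Suppose for contradiction that some $F_0 \in \ff$ is disjoint from $S$. I would bound the degree of $F_0$ in $KG(\ff)$ from below by estimating the number of $G \in \ff$ meeting $F_0$ and subtracting from $|\ff|$. Split $\ff$ as the disjoint union of $\ff(\bar S)$ and $\ff_1 := \{G \in \ff : G \cap S \ne \emptyset\}$. Any $G \in \ff_1$ meeting $F_0$ contains some $x \in S$ together with some $y \in F_0$; since $S \cap F_0 = \emptyset$, these two elements are distinct, so the union bound gives at most $|S|\cdot|F_0|\cdot \binom{n-2}{k-2} \le tk\binom{n-2}{k-2}$ such sets. Adding the trivial bound $|\ff(\bar S)| \le \binom{n-4}{k-4}$, the total number of $G \in \ff$ meeting $F_0$ is at most $tk\binom{n-2}{k-2} + \binom{n-4}{k-4}$. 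Using $|\ff| \ge 2t^2 k\binom{n-2}{k-2}$ together with $\binom{n-4}{k-4} \le \binom{n-2}{k-2}$, this quantity is comfortably below $|\ff|/t$ for $k \ge k_0$. Hence $d(F_0) \ge (1 - 1/t)|\ff|$, contradicting $d(\ff) < (1 - 1/t)|\ff|$; so $\ff(\bar S) = \emptyset$.

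The ``in particular'' clause is then immediate: $\ff(\bar S) = \emptyset$ means every $F \in \ff$ meets $S$, so $S$ is a transversal and $\tau(\ff) \le |S| \le t$. There is no real obstacle here beyond bookkeeping: the heavy lifting has been done by Theorem~\ref{thmnew1}, and the passage from ``$|\ff(\bar S)|$ small'' to ``$\ff(\bar S)$ empty'' is precisely what the extra factor of order $k$ in the size hypothesis buys us via the two-pointed union bound above. The only point requiring any care is ensuring $F_0$ is genuinely disjoint from $S$ so that the pairs $(x,y) \in S \times F_0$ contribute distinct elements, which is immediate from $F_0 \in \ff(\bar S)$.
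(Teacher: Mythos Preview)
Your argument is correct and follows essentially the same approach as the paper: assume $d(\ff)<(1-1/t)|\ff|$, invoke Theorem~\ref{thmnew1} to obtain $S$, pick $U\in\ff(\bar S)$ if it exists, and bound the number of members of $\ff$ meeting $U$ by $tk\binom{n-2}{k-2}+\binom{n-4}{k-4}<\frac{1}{t}|\ff|$ to force a contradiction. The only cosmetic difference is that the paper phrases the last step as $|\ff|-d(\ff)\le|\ff\setminus\ff(\bar U)|$, while you phrase it as a lower bound on the degree of $U$; these are the same inequality.
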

\begin{proof}
  Assume that $d(\ff)<\big(1-\frac 1t\big) |\ff|$. In view of Theorem~~\ref{thmnew1}, we can choose a set $S$, $|S|\le t$, satisfying $|\ff(\bar S)|\le {n-4\choose k-4}$. If $\ff(\bar S) = \emptyset$ then we are done. Otherwise fixe some $U\in \ff(\bar S)$. Consider an arbitrary set $F\in \ff\setminus \ff(\bar U)$. Then either $F\in \ff(\bar S),$ or $F\cap U\ne \emptyset$ and $F\cap S\ne \emptyset$ hold simultaneously. Thus $|\ff|-d(\ff)\le |\ff\setminus \ff(\bar U)|\le {n-4\choose k-4}+tk{n-2\choose k-2}<\frac 1t |\ff|$, a contradiction. Therefore, $|\ff(\bar S)| = \emptyset $ must hold.
  %Next, assume that $\ff(\bar S)$ is not empty and take a set $U\in \ff(\bar S)$. There are at most $|S|\cdot k{n-2\choose k-2}\le tk{n-2\choose k-2}$ sets from $\ff\setminus \ff(\bar S)$, along with ${n-4\choose k-4}$ sets from $\ff(\bar S)$, that intersect $U$. This is less than $\frac 1t|\ff|$ in total.
\end{proof}
The next theorem focuses on the case $t=2$ and the application to the maximum degree question for non-intersecting families. It gives a more precise structural information depending on the size of $\ff$. The situation turns out to be quite complicated for small values of $|\ff|$. Consider the following examples:
$$\mathcal E_i: = \Big\{F\in {[n]\choose k}: 1\in F, F\cap [2,k+i+1]\ne \emptyset\Big\}\cup {[2,k+i+1]\choose k}.$$
Note that, for $i<k$ and $n>k^2\ge 100$,
\begin{equation}\label{eqdei}d(\mathcal E_i) = |\mathcal E_i([2,k+1])| = {n-k-1\choose k-1}-{n-k-i-1\choose k-1}.
\end{equation}
\begin{thm}\label{thmnew2} Suppose that $k\ge k_0$ and $n\ge 64k^2$. Let $\ff\subset {[n]\choose k}$ satisfy $d(\ff) = d(|\ff|,n,k).$ Then there exists $S$ as in Theorem~\ref{thmnew1} such that the  following holds.
\begin{enumerate}
  \item If, for some integer $1\le i\le \frac k2$, \begin{equation}\label{eqcondnew2} |\D|\le |\ff|\le {n-1\choose k-1}-{n-k-i-1\choose k-1}+{k+i\choose k},\end{equation} then $S = \{x\}$ for some $x\in [n]$ and $\ff(\bar x)\subset {Y\choose k}$ for some set $Y$ of size $k+i$, $x\notin Y$. In particular, $|\ff(\bar x)|\le {k+i\choose k}$. Moreover, if equality holds in \eqref{eqcondnew2} then $\ff$ is isomorphic to $\mathcal E_i$.
  \item If $|\ff|\ge 2|\D|$ then $|S| = 2$.
  \item If $|\ff|\ge 4|\D|$ then $\ff(\bar S) = \emptyset$.
\end{enumerate}
\end{thm}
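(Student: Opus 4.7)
The plan is to apply Theorem~\ref{thmnew1} with $t=2$ to the extremal $\ff$; constructions such as $\mathcal E_i$ (relevant in parts (1),(2)) and the two-star $\mathcal H(1,F_1)\cup\mathcal H(2,F_2)$ (relevant in parts (2),(3)) witness $d(|\ff|,n,k)<\tfrac12|\ff|$ throughout the ranges in question, so the theorem produces $S\subset[n]$ with $|S|\le 2$ and $|\ff(\bar S)|\le\binom{n-4}{k-4}$. Whenever $S=\{x\}$, write $Y:=\bigcup_{F\in\ff(\bar x)}F$. Part (3) is then immediate by repeating the argument given in the proof of the unlabeled theorem just above: if $U\in\ff(\bar S)$ then
\[
|\ff|-d(\ff)\le\binom{n-4}{k-4}+2k\binom{n-2}{k-2},
\]
and $|\ff|\ge 4|\D|\ge 4k\binom{n-k-1}{k-2}$ together with $n\ge 64k^2$ puts the right side strictly below $|\ff|/2$, contradicting $d(\ff)<|\ff|/2$.

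Parts (1) and (2) rest on a common dichotomy for the case $S=\{x\}$. Either some $G\in\ff(x)$ is disjoint from $Y$ -- in which case $d(\ff)\ge|\ff(\bar x)|$ -- or $\ff(x)\subset\{G\in\s_x:G\cap Y\ne\emptyset\}$, yielding $|\ff|\le|\mathcal E_{|Y|-k}|$. Complementing this, for any $F\in\ff(\bar x)$, inclusion-exclusion on $\ff(x)\cap\ff(\bar F)$ gives $d(F)\ge|\ff(x)|-|\D|+2=|\ff|-|\ff(\bar x)|-|\D|+2$. For part (1), combining these with $d(\ff)\le d(\mathcal E_i)$ forces $|Y|=k+i$ and hence $\ff(\bar x)\subset\binom{Y}{k}$; in the equality case $|\ff|=|\mathcal E_i|$, each of the above inequalities must be tight, pinning $\ff(x)$ to $\{G\in\s_x:G\cap Y\ne\emptyset\}$ via a Hilton--Milner-type stability step and yielding the isomorphism to $\mathcal E_i$. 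Ruling out $|S|=2$ within the range of part (1) is done by observing that a two-element cover would produce a set of max degree $\approx\binom{n-k-1}{k-1}>d(\mathcal E_i)$, contradicting extremality.

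Part (2) is the converse. Assume $S=\{x\}$. A direct convexity computation using $n\ge 64k^2$ shows $|\mathcal E_k|<2|\D|$, so the hypothesis $|\ff|\ge 2|\D|$ together with $|\ff|\le|\mathcal E_{|Y|-k}|$ forces $|Y|>2k$. The lower bound $d(\ff)\ge|\ff|-|\ff(\bar x)|-|\D|+2$ then brings $d(\ff)$ close to $d(\mathcal E_{|Y|-k})$, whereas the two-star construction $\mathcal H(x',F'_1)\cup\mathcal H(y',F'_2)$ at the same size has max degree $\approx d(\mathcal E_k)<d(\mathcal E_{|Y|-k})$, contradicting the extremality of $\ff$. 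Hence $|S|=2$.

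The main obstacle is part (1): extracting $|Y|=k+i$ exactly (rather than a weaker bound) and obtaining the isomorphism $\ff\cong\mathcal E_i$ at equality both require a tight stability analysis ruling out near-miss configurations where $\ff(x)$ deviates from the prescribed star or $\ff(\bar x)$ is a proper subset of $\binom{Y}{k}$. For part (2), the comparison between the $\mathcal E_{|Y|-k}$- and $\mathcal H$-type achievable values of $d$ lives at the scale of a lower-order term, so the error $\binom{n-4}{k-4}$ from Theorem~\ref{thmnew1} will need to be absorbed with care.
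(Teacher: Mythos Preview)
Your plan for part~(3) is correct and essentially matches the paper. For part~(2) your route is more roundabout than necessary: the paper simply observes that if $|S|=1$ then any $U\in\ff(\bar x)$ satisfies
\[
d(\ff)\ge |\ff|-|\mathcal H(x,U)|-{n-4\choose k-4},
\]
which for $|\ff|\ge 2|\D|$ already gives $d(\ff)\ge\tfrac12|\ff|$ up to negligible error and contradicts extremality. Your detour through $|\mathcal E_{|Y|-k}|$ and a comparison with a two-star construction introduces unnecessary slack and the asserted ``$\approx d(\mathcal E_k)$'' for the two-star is not correct at this scale.

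Part~(1) has two genuine gaps. First, your rule-out of $|S|=2$ (``a two-element cover would produce a set of max degree $\approx\binom{n-k-1}{k-1}$'') is not justified: with a two-element cover there is no obvious single vertex of such high degree, and naive counting only yields $d(\ff)\gtrsim \tfrac12|\ff|-k\binom{n-2}{k-2}$, which can be negative in this range. The paper instead invokes the expander mixing lemma for $KG(n-2,k-1)$ to get $d(\ff)\ge 0.4|\ff|$ whenever $|S|=2$; since $d(\mathcal E_i)<\tfrac13|\mathcal E_i|$ for $i\le k/2$, this forces $|S|=1$.

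Second, and more seriously, your dichotomy does not bound $|Y|$ from above. Case~(b) yields $|\ff|\le|\mathcal E_{|Y|-k}|$, which is a \emph{lower} bound on $|Y|$; case~(a) only gives $d(\ff)\ge|\ff(\bar x)|$, useless since $|\ff(\bar x)|\le\binom{n-4}{k-4}$. The paper's key step is a codegree count: if there were $k+i+1$ elements $y$ with $|\ff(\{x,y\})|\ge\binom{n-2}{k-2}-\binom{n-4}{k-4}$ then $|\ff|$ would exceed the upper bound in \eqref{eqcondnew2}; hence at most $k+i$ elements have high codegree with $x$. If $|Y|>k+i$, some $U\in\ff(\bar x)$ contains a low-codegree element, and this single ``hole'' boosts $d(U)$ by $\binom{n-4}{k-4}$ beyond the generic bound $|\ff|-|\mathcal H(x,U)|$, beating the construction in \eqref{eqs=12} and contradicting extremality. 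This codegree argument, not a Hilton--Milner-type stability, is what pins $|Y|\le k+i$ and drives the equality case.
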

Case 2 of the theorem is  necessary since, while the example for the tightness of case 3 is expected to be of the following type:
$$\mathcal W_\ell:= \Big\{F\in {[n]\choose k}: \{1,2\}\subset F\Big\}\cup \Big\{F\in {[n]\choose k}:|F\cap[1,2]| =1, F\cap [3,\ell+2]\ne \emptyset\Big\}$$
(or, more generally, $\mathcal L(m)$-type on $[3,n]$), for small $\ell$ these examples may be `improved' by adding the following family:
$$\mathcal W'_{\ell'}:= \Big\{F\in {[n]\choose k}: |F\cap [3,\ell+2]|\ge \ell'\Big\}.$$
Indeed, if $\ell'$, say, satisfies $\ell'>\frac 23\ell>\frac {3k}4$, then $\mathcal W_\ell\cup \mathcal W'_{\ell'}$ beats families with $|S| = 1$ and has the same maximum degree as $\mathcal W_\ell$ since the elements from $\mathcal W'_{\ell'}$ have low degree.\\

Our next result gives a concrete numerical implication of Theorems~\ref{thmnew1}.
\begin{thm}\label{thm3} In the conditions of Theorem~\ref{thmnew2} (3), we have \begin{equation}\label{eq10}d(\ff)\ge \frac{\frac{|\ff|-{n-2\choose k-2}}2{n-k-1\choose k-1}}{{n-2\choose k-1}} - {n-k-2\choose k-2}\ge \frac 12\Big(1-\frac{k^2}n\Big)|\ff|-\frac 32{n-k-2\choose k-2}.\end{equation}
\end{thm}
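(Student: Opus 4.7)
I would apply Theorem~\ref{thmnew2}(3) to fix a two-element cover $S=\{x,y\}$ of $\ff$ (so $\ff(\bar S)=\emptyset$) and partition $\ff = A \sqcup B \sqcup C$, where $A$, $B$, $C$ consist of the sets of $\ff$ containing only $x$, only $y$, and both, respectively. Writing $a=|A|$, $b=|B|$, $c=|C|$, one has $c \le \binom{n-2}{k-2}$ and, by the symmetric roles of $x$ and $y$, may assume $a\ge b$, hence $a \ge (|\ff|-\binom{n-2}{k-2})/2$.

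The key reduction is the observation that for every $F\in B$, $\deg_\ff(F) = |A(\bar F)|$: any $G \in \ff$ disjoint from $F$ must avoid $y \in F$ and therefore lies in $A$. Hence $d(\ff) \ge \max_{F \in B}|A(\bar F)|$, and it suffices to exhibit a single $F \in B$ with $|A(\bar F)| \ge ap - \binom{n-k-2}{k-2}$, where $p := \binom{n-k-1}{k-1}/\binom{n-2}{k-1}$; substituting the lower bound on $a$ then gives the first displayed inequality.

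For this I would use averaging. Writing $F=\{y\}\cup F'$ with $F' \in \binom{[n]\setminus\{x,y\}}{k-1}$, double counting gives $\sum_{F'\in\binom{[n-2]}{k-1}}|A(\bar F')| = a\binom{n-k-1}{k-1}$, so the unconditional average of $|A(\bar F')|$ is $ap$. The task is to show that restricting the average to $F' \in B' := \{F\setminus\{y\}:F\in B\}$ costs at most $\binom{n-k-2}{k-2}$. This would follow by lower-bounding $e(H) := \sum_{F'\in B'}|A(\bar F')|$ via inclusion--exclusion, $e(H) \ge ab - \sum_j a_jb_j$ (with $a_j=|A(j)|$, $b_j=|B(j)|$ for $j\in [n]\setminus\{x,y\}$), combined with the structural estimate $\sum_j a_jb_j \le ab(1-p) + b\binom{n-k-2}{k-2}$; dividing by $b$ then produces the desired $F \in B$.

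The second displayed inequality reduces to elementary algebra: $p \ge 1 - k^2/n$ (from expanding $\prod_{i=0}^{k-2}(n-k-1-i)/(n-2-i)$ under $n\ge 64k^2$), together with $\binom{n-k-2}{k-2} \ge p\binom{n-2}{k-2}$ (since the ratio $\binom{n-k-2}{k-2}/\binom{n-2}{k-2}$ equals $\tfrac{n-k}{n-k-1}\cdot p \ge p$), gives exactly the required inequality $-\tfrac{1}{2}p\binom{n-2}{k-2}-\binom{n-k-2}{k-2} \ge -\tfrac{3}{2}\binom{n-k-2}{k-2}$. The main obstacle is the bound $\sum_j a_jb_j \le ab(1-p) + b\binom{n-k-2}{k-2}$: for an arbitrary family with $\tau(\ff)=2$ it can fail (e.g.\ if $A'$ and $B'$ are both concentrated around a common element), so one must use the extremality of $\ff$: any $j$ for which $a_jb_j$ is too large would allow a local modification producing a family of the same size with strictly smaller maximum degree, contradicting $d(\ff)=d(|\ff|,n,k)$, in the spirit of the arguments behind Theorems~\ref{thmnew1}--\ref{thmnew2}.
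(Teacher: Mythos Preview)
Your reduction to a two-element cover $S=\{x,y\}$ and the partition $\ff=A\sqcup B\sqcup C$ is exactly what the paper does, and your derivation of the second inequality in \eqref{eq10} (via $p\ge 1-k^2/n$ and $p\binom{n-2}{k-2}\le \binom{n-k-2}{k-2}$) matches the paper's computation in \eqref{eq667} and \eqref{eqs=2}.

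The genuine gap is the first inequality. You correctly reduce it to finding $F\in B$ with $|A(\bar F)|\ge ap-\binom{n-k-2}{k-2}$, equivalently to the estimate
\[
\sum_j a_jb_j\le ab(1-p)+b\binom{n-k-2}{k-2},
\]
but you yourself note this can fail for non-extremal $\ff$, and the proposed rescue via ``local modification'' is only a gesture: no modification scheme is given, and it is not at all clear that extremality of $\ff$ forces such a sharp inequality with precisely the constant $\binom{n-k-2}{k-2}$. In fact that constant is exactly the second-largest eigenvalue of the Kneser graph $KG(n-2,k-1)$, and the paper obtains the bound by the expander mixing lemma (Proposition~\ref{propal} applied with $D=\binom{n-k-1}{k-1}$ and $\lambda=\binom{n-k-2}{k-2}$): this gives
\[
\delta_B\;\ge\; ap-\binom{n-k-2}{k-2}\sqrt{a/b},
\]
which, combined with the lower bound on $|B|$ supplied by the ``moreover'' clause of Theorem~\ref{thmnew1}, is shown by a short monotonicity argument to be minimized (for fixed $a+b$) at $a=b$, yielding \eqref{eq17}. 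Note that your target $e(A',B')\ge abp-b\lambda$ is actually \emph{stronger} than what the mixing lemma gives when $a>b$, so even with the spectral tool in hand your route would require the extra optimization step. Replace the unsubstantiated extremality argument with Proposition~\ref{propal} and the monotonicity analysis; the rest of your outline then goes through.
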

\vskip+0.1cm
{\bf Tightness.}
Although the bound \eqref{eq10} may appear somewhat arbitrary, it is actually optimal up to some lower order terms. See Section~\ref{secopt} for details.

\section{Simple bounds}
In this section, we present proofs of several weaker bounds which are, however, less technical. The aim is to convey a feeling of the problem to the reader. Moreover, the methods used here are different and may be interesting in their own right.

Throughout this section, we fix a family $\ff\subset {[n]\choose k},$ $|\ff|>{n-1\choose k-1}$. Note that, due to the Erd\H os--Ko--Rado theorem, $\ff$ is not intersecting. Our goal is to present several relatively simple cases in which $d(\ff)>0.49|\ff|$ and approaches to prove such a bound.

 For $1\le i<k$, define $c(i) = c(i,\ff) = {n-i\choose k-i}^{-1}\max\big\{|\ff(P)|: P\in {[n]\choose i}\big\}.$ %In particular, $c(0) = |\ff|$.
%\begin{lem} We have \begin{equation}\label{eq4} c(1)\le c(2)\le\ldots\le c(k) = 1. \end{equation}
%\end{lem}
%\begin{proof}
%  Fix $1\le i<k$ and $P\in {[n]\choose i}$ satisfying $|\ff(P)| = c(i){n-i\choose k-i}$. Note that $\ff(P)\subset {[n]\setminus P\choose k-i}$. Thus the average degree of $\ff(P)$ is $\frac {k-i}{n-i}|\ff(P)| = c(i){n-i-1\choose k-i-1}$. Consequently, we may choose $x\in [n]\setminus P$ satisfying
%  $$|\{G\in \ff(P):x\in G\}|\ge c(i){n-i-1\choose k-i-1}.$$
%  since the LHS is exactly $|\ff(P\cup \{x\})|,$ $c(i)\le c(i+1)$ follows. As $c(k)=1$ is evident, the proof is complete.
%\end{proof}

\begin{lem}\label{lemsimple1}
  Define $\gamma:=|\ff|/{n-1\choose k-1}$. Then
  \begin{equation}\label{eq55}
    d(\ff)\ge \frac 12\Big(1-\frac {c(2)k^3}{\gamma n}\Big)|\ff|.
  \end{equation}
\end{lem}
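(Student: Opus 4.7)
The plan is to pick any disjoint pair $F,G\in\ff$---which exists by Erd\H os--Ko--Rado since $|\ff|>\binom{n-1}{k-1}$---and exploit the elementary inequality $\max(a,b)\ge(a+b)/2$ to write
\[
d(\ff)\ge\max\{|\ff(\bar F)|,|\ff(\bar G)|\}\ge\tfrac12\bigl(|\ff(\bar F)|+|\ff(\bar G)|\bigr)\ge\tfrac12\bigl|\ff(\bar F)\cup\ff(\bar G)\bigr|.
\]
Setting $N:=|\{H\in\ff:H\cap F\ne\emptyset\text{ and }H\cap G\ne\emptyset\}|$, the last quantity equals $\tfrac12(|\ff|-N)$, so the task reduces to an upper bound on $N$.

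The key observation is that disjointness of $F$ and $G$ hands us a transverse pair: for any $H$ counted by $N$, picking $x\in H\cap F$ and $y\in H\cap G$ gives distinct elements with $\{x,y\}\subset H$, so $H\in\ff(\{x,y\})$. A union bound over the $|F|\cdot|G|=k^2$ pairs $(x,y)\in F\times G$ therefore gives
\[
N\le\sum_{x\in F,\,y\in G}|\ff(\{x,y\})|\le k^2\,c(2)\binom{n-2}{k-2}.
\]

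Substituting this and using $\binom{n-2}{k-2}=\tfrac{k-1}{n-1}\binom{n-1}{k-1}$ together with $\tfrac{k-1}{n-1}\le\tfrac{k}{n}$ and $|\ff|=\gamma\binom{n-1}{k-1}$, one converts the inequality into
\[
d(\ff)\ge\tfrac12|\ff|-\tfrac12 c(2)k^2\binom{n-2}{k-2}\ge\tfrac12|\ff|\Bigl(1-\tfrac{c(2)k^3}{\gamma n}\Bigr),
\]
which is the claim. There is no real obstacle beyond spotting the ``$\max\ge\text{average}$'' trick: applied to a disjoint pair in $\ff$, it converts what would naively demand single-element degree bounds (i.e., $c(1)$ or $\sum_x|\ff(x)|^2$) into a pair bound governed by $c(2)$, which is precisely the quantity the lemma controls.
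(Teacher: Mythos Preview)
Your proof is correct and follows essentially the same approach as the paper's: pick a disjoint pair $F,G\in\ff$, bound the number $N$ of sets meeting both via the $k^2$ transversal pairs and $c(2)$, and conclude that one of $F,G$ is disjoint from at least half of the remaining $|\ff|-N$ sets. The only cosmetic difference is that you spell out the $\max\ge$ average step explicitly and write $\binom{n-2}{k-2}=\frac{k-1}{n-1}\binom{n-1}{k-1}\le\frac{k}{n}\binom{n-1}{k-1}$ where the paper just asserts the strict inequality directly.
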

\begin{proof}
  Since $\ff$ is not intersecting, we may choose $G,H\in \ff$, $G\cap H = \emptyset$. If $F\in \ff$ intersects both $G$ and $H$ then it contains at least one of the $k^2$ pairs $(x,y)$ with $x\in G, y\in H$. This allows at most $k^2c(2) {n-2\choose k-2}$ such sets. Note that
  $${n-2\choose k-2}<\frac kn{n-1\choose k-1} =
 \frac{k}{\gamma n}|\ff|.$$
Now \eqref{eq55} follows from the fact that each of the remaining $F\in \ff$ are disjoint to either $G$ or $H$ (or both).
\end{proof}

This result immediately implies the following corollary.

\begin{cor}
  If $n\ge 50 k^3$ then $d(\ff)>0.49|\ff|$.
\end{cor}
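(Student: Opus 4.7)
The plan is to simply plug the hypothesis $n \ge 50k^3$ into the bound from Lemma~\ref{lemsimple1} and verify that the factor in front of $|\ff|$ exceeds $0.49$. Concretely, the lemma gives
\[
d(\ff) \ge \tfrac{1}{2}\Bigl(1 - \tfrac{c(2)k^3}{\gamma n}\Bigr)|\ff|,
\]
where $\gamma = |\ff|/\binom{n-1}{k-1}$, so it suffices to show $\frac{c(2)k^3}{\gamma n} < 0.02$.

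First I would bound the two unknown quantities $c(2)$ and $\gamma$ from the trivial side. By definition $c(2) = \binom{n-2}{k-2}^{-1}\max\{|\ff(P)| : P \in \binom{[n]}{2}\}$, and since $|\ff(P)| \le \binom{n-2}{k-2}$ for any pair $P$, we get $c(2) \le 1$. For $\gamma$, the standing hypothesis of the section is $|\ff| > \binom{n-1}{k-1}$, so $\gamma > 1$.

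Combining these bounds with $n \ge 50k^3$ yields
\[
\frac{c(2)k^3}{\gamma n} \;<\; \frac{k^3}{n} \;\le\; \frac{1}{50} = 0.02,
\]
hence $d(\ff) > \tfrac{1}{2}(1-0.02)|\ff| = 0.49|\ff|$, which is the desired conclusion.

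There is essentially no obstacle here: the work was done in Lemma~\ref{lemsimple1}, and the corollary is a one-line substitution using the two trivial bounds $c(2) \le 1$ and $\gamma > 1$. The only thing worth noting is that strict inequality $d(\ff) > 0.49|\ff|$ comes from the strict inequality $|\ff| > \binom{n-1}{k-1}$ (equivalently $\gamma > 1$), which is exactly the assumption fixed throughout the section.
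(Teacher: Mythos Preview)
Your proof is correct and matches the paper's own argument exactly: apply Lemma~\ref{lemsimple1} with the trivial bounds $c(2)\le 1$ and $\gamma>1$, then use $n\ge 50k^3$ to make the error term at most $1/50$.
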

\begin{proof}
   Then we can apply Lemma~\ref{lemsimple1} and use $\gamma>1$, $c(2)\le 1$.
\end{proof}

For $k\ge 50$ we can prove something stronger.
\begin{thm}\label{thmk50}
  If $n\ge 4k^3$ and $k\ge 50$ then $d(\ff)>0.49|\ff|$.
\end{thm}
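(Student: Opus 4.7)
Plan: The plan is to combine Lemma~\ref{lemsimple1} with a structural argument built around the densest pair of elements in $\ff$. Set $\gamma := |\ff|/\binom{n-1}{k-1} > 1$. Under the hypothesis $n \ge 4k^3$, Lemma~\ref{lemsimple1} rewrites as $d(\ff) \ge \tfrac{1}{2}\bigl(1 - c(2)/(4\gamma)\bigr)|\ff|$, which already exceeds $0.49|\ff|$ whenever $c(2) \le 0.08\gamma$. Together with the trivial $c(2)\le 1$, this settles every $\gamma \ge 12.5$, so we may henceforth assume $c(2) > 0.08\gamma$. Fix a pair $P = \{x,y\}$ attaining the maximum, so that $|\ff^P| > 0.08\gamma\binom{n-2}{k-2}$, where $\ff^P := \{F \in \ff : P \subset F\}$.

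Suppose first that $\ff \subseteq \mathcal{S}_x \cup \mathcal{S}_y$. Since $|\ff|>\binom{n-1}{k-1}$ forces $\ff$ to be non-intersecting, we may choose $F_1 \in \ff \cap (\mathcal{S}_x \setminus \mathcal{S}_y)$ and $F_2 \in \ff \cap (\mathcal{S}_y \setminus \mathcal{S}_x)$. Then $d(F_1) \ge |\mathcal{S}_y \cap \ff| - (\binom{n-1}{k-1} - \binom{n-k-1}{k-1})$, and the slack term is at most $k(k-1)\binom{n-1}{k-1}/(n-k)\le \binom{n-1}{k-1}/(3k)$ by $n\ge 4k^3$ and $k\ge 50$. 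The symmetric estimate holds for $F_2$, and since $|\mathcal{S}_x\cap\ff|+|\mathcal{S}_y\cap\ff|\ge|\ff|$, one of the two stars meets $\ff$ in at least $|\ff|/2$ sets. Hence $d(\ff)\ge |\ff|/2 - \binom{n-1}{k-1}/(3k) > 0.49|\ff|$, using $|\ff|>\binom{n-1}{k-1}$ and $k\ge 50$.

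Otherwise there exists $F^*\in \ff$ with $F^*\cap P = \emptyset$. The same counting, applied now to both stars, yields
\[
d(F^*) \ge |\ff\cap(\mathcal{S}_x\cup\mathcal{S}_y)| - 2\bigl(\tbinom{n-1}{k-1}-\tbinom{n-k-1}{k-1}\bigr) \ge |\ff\cap(\mathcal{S}_x\cup\mathcal{S}_y)| - \tbinom{n-1}{k-1}/75,
\]
so $d(\ff)\ge 0.49|\ff|$ as soon as $|\ff\cap(\mathcal{S}_x\cup\mathcal{S}_y)| \ge 0.49|\ff| + \binom{n-1}{k-1}/75$.

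The main obstacle is the complementary sub-case, in which $P$ fails to absorb enough of $\ff$, so that $\ff_0 := \ff\setminus(\mathcal{S}_x\cup\mathcal{S}_y) \subseteq \binom{[n]\setminus P}{k}$ has size exceeding (say) $0.5\binom{n-1}{k-1}$. A Hilton--Milner calculation on $[n]\setminus P$ (using $n-2 \ge 2k$) rules out $\ff_0$ being intersecting, so Lemma~\ref{lemsimple1} is available for $\ff_0$ on the $(n-2)$-element ground set. I would pick $F'\in\ff_0$ witnessing the bound Lemma~\ref{lemsimple1} delivers for $\ff_0$ and add to it the $\ff\cap(\mathcal{S}_x\cup\mathcal{S}_y)\cap\ff(\bar{F'})$-contribution (again with $\binom{n-1}{k-1}/75$ slack) to reach $d(F')\ge 0.49|\ff|$. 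The technical heart of the argument is making these two contributions add up across the whole residual range $\gamma\in(1,12.5)$, iterating the dense-pair step on $\ff_0$ if its own densest pair is again too heavy for Lemma~\ref{lemsimple1} alone.
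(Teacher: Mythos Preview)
Your plan has a genuine gap in its final case. The claim that $|\ff_0| > 0.5\binom{n-1}{k-1}$ forces $\ff_0 \subset \binom{[n]\setminus P}{k}$ to be non-intersecting is false: the Erd\H os--Ko--Rado bound on the $(n-2)$-element ground set is $\binom{n-3}{k-1} = \bigl(1-O(k/n)\bigr)\binom{n-1}{k-1}$, which for $n\ge 4k^3$ is essentially $\binom{n-1}{k-1}$ and vastly exceeds $0.5\binom{n-1}{k-1}$. So $\ff_0$ may well be a full star on $[n]\setminus P$, and you cannot invoke Lemma~\ref{lemsimple1} on it. Even ignoring this, your last paragraph is explicitly only a plan (``The technical heart of the argument is\dots''); the iteration is not actually carried out, there is no argument that it terminates, and the accumulated $\binom{n-1}{k-1}/75$-type slack terms are not shown to stay below $0.01|\ff|$ across the whole range $\gamma\in(1,12.5)$.

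The paper's proof avoids all of this structural case analysis with a two-line numerical argument based on two ingredients you did not use. First, whenever $d(\ff)<\tfrac12|\ff|$ one has $c(1)>1/(2k)$ (any $F\in\ff$ has $\sum_{x\in F}|\ff(x)|\ge |\ff\setminus\ff(\bar F)|>\tfrac12|\ff|$). Second, refining Lemma~\ref{lemsimple1} by conditioning on the densest \emph{single} element rather than a disjoint pair yields
\[
d(\ff)\ \ge\ \max\Bigl\{\tfrac12,\,1-c(1)\Bigr\}\cdot\Bigl(1-\tfrac{c(2)k^2}{c(1)\,n}\Bigr)|\ff|,
\]
so the exponent of $k$ drops from $3$ to $2$ and $\gamma$ is replaced by $c(1)$ in the denominator. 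With $n\ge 4k^3$ and $c(2)\le 1$ this gives $(1-c(1))\bigl(1-\tfrac{1}{4c(1)k}\bigr)|\ff|$ on $c(1)\in[1/(2k),\tfrac12]$; this function of $c(1)$ is concave, so its minimum is at an endpoint, and both endpoint values equal $\tfrac12(1-\tfrac1{2k})>0.49$ for $k\ge 50$. No recursion and no dense-pair structure are needed.
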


\begin{proof}
For two families $\G\subset {[n]\choose g},$ $\h\subset {[n]\choose h}$, let $e(\G,\h)$ denote the number of disjoint pairs $G,H$, where $G\in \G$, $H\in \h$. In the case $\h = \{H\}$ consists of one member, we define $e(\G, H):=e(\G,\{H\})$.

The following simple lemma will be essential for the proof.

\begin{lem}
  Suppose that $1\le i<k$, $P\in {[n]\choose i}$ and $|\ff(P)| = c(i) {n-i\choose k-i}$. then
  \begin{align}\label{eq6}
    e(\ff(P),H)\ge  & |\ff(P)|-c(i+1)k{n-i-1\choose k-i-1}\ \ \ \text{for all }H\in \ff(\bar P), \\
   \label{eq7} e(\ff(P),\ff(\bar P))\ge & \Big(1-\frac {c(i+1)k^2}{c(i)n}\Big)|\ff(P)||\ff(\bar P)|.
  \end{align}
\end{lem}
\begin{proof}
  Fixing $H\in \ff(\bar P)$, $G\cap H\ne \emptyset$ for some $G\in \ff( P)$ is equivalent to $G\cup P\in \ff$ containing at least one of the $k$ sets $P\cup \{x\}$, $x\in H$. This allows for less than $c(i+1)k{n-i-1\choose k-i-1}$ such sets, implying \eqref{eq6}.

  To deduce \eqref{eq7}, simply sum \eqref{eq6} over all $H\in \ff(\bar P)$ and use $|\ff(P)| = c(i){n-i\choose k-i}$ together with ${n-i-1\choose k-i-1} = \frac{k-i}{n-i}{n-i\choose k-i} < \frac kn{n-i\choose k-i}.$
\end{proof}

\begin{cor}
  \begin{equation}\label{eq8}
    d(\ff)\ge \max\Big\{\frac 12, 1-c(1)\Big\}\cdot \Big(1-\frac {c(2)k^2}{c(1)n}\Big) |\ff|.
  \end{equation}
\end{cor}
\begin{proof}
  Note $|\ff|  = |\ff(x)|+|\ff(\bar x)|$. This implies $\max\{|\ff(x)|,|\ff(\bar x|)\}\ge \frac 12|\ff|$. Moreover, $|\ff(\bar x)|\ge |\ff|-c(1){n-1\choose k-1}> (1-c(1))|\ff|$. Applying \eqref{eq7} with $i=1$ and averaging yields \eqref{eq8}.
\end{proof}

\begin{lem}
If $d(\ff)< \frac 12|\ff|$ then we have  \begin{equation}\label{eq3}
  c(1)>\frac {|\ff|}{2k{n-1\choose k-1}}>\frac 1{2k}.
\end{equation}
\end{lem}

\begin{proof}
  Let $F\in \ff$ be arbitrary. In view of our assumption, $|\ff(\bar F)|<\frac 12|\ff|$. Equivalently, $|\ff\setminus \ff(\bar F)|>\frac 12 |\ff|.$ On the other hand, $|\ff\setminus \ff(\bar F)|\le \sum_{x\in F}|\ff(x)|\le kc(1){n-1\choose k-1}$. Comparing these two inequalities, the claim follows.
\end{proof}

%Note that $|\ff(\bar x)|>(1-c(1)){n-1\choose k-1}$ for any $x$, and so if $c(1)<\frac 12$ then it is better to use the following bound, obtained in the same way.

Let us continue with the proof of Theorem~\ref{thmk50}. In view of the lemma above, we may assume that $c(1)>\frac 1{2k}$. If $c(1)\ge \frac 12$ then \eqref{eq8} combined with $n\ge 100 k^2$ implies $d(\ff)\ge 0.49|\ff|$. If $\frac 1{2k}\le c(1)<\frac 12$ then the right hand side of \eqref{eq8} is at least $(1-c(1))\big(1-\frac 1{4c(1)k}\big)|\ff|$ due to $n\ge 4k^3$. It is an easy calculation that the minimum of this expression is attained for $c(1) = \frac 1{2k}$ or $\frac 12$ and the expression is at least $ 0.49|\ff|$ in both cases.
\end{proof}

Next, we add one more idea and prove the same statement in a yet wider range.

\begin{thm}
  If $n\ge 100k^2$ then $d(\ff)\ge 0.49|\ff|$.
\end{thm}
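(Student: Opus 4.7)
The plan is to refine the argument of Theorem~\ref{thmk50} by bringing in the pair-version of the cascade inequality \eqref{eq7} as a second tool. Assume for contradiction that $d(\ff)<0.49|\ff|$, and write $\gamma:=|\ff|/\binom{n-1}{k-1}>1$.

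First I would adapt the counting in the proof of \eqref{eq3} to the sharper threshold $0.49$: summing $|\ff\setminus\ff(\bar F)|\ge 0.51|\ff|$ over $F\in\ff$ gives $\sum_x|\ff(x)|^2\ge 0.51|\ff|^2$, and combined with $\sum_x|\ff(x)|^2\le c(1)\binom{n-1}{k-1}\cdot k|\ff|$ this yields $c(1)\ge 0.51\gamma/k$. If $c(1)\ge 1/2$, then \eqref{eq8} closes the argument immediately: the prefactor $\max\{1/2,1-c(1)\}$ equals $1/2$, and $c(2)k^2/(c(1)n)\le 2/100$ by $n\ge 100k^2$ and $c(2)\le 1$, so $d(\ff)\ge (1/2)(49/50)|\ff|=0.49|\ff|$, a contradiction.

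In the remaining range $c(1)<1/2$, let $P=\{x,y\}$ realize $c(2)$. Because $|\ff(x)\cup\ff(y)|\le 2c(1)\binom{n-1}{k-1}<|\ff|$, the family $\ff(\bar P)$ is nonempty and in fact $|\ff(\bar P)|\ge (1-2c(1)/\gamma)|\ff|$. Applying \eqref{eq7} with $i=2$ and averaging over $F\in\ff(P)$ (whose $\ff$-neighbors all lie in $\ff(\bar P)$ since $P\subset F$) yields the pair bound
\[
d(\ff)\ge\Big(1-\frac{c(3)k^2}{c(2)n}\Big)|\ff(\bar P)|\ge\Big(1-\frac{1}{100\,c(2)}\Big)\Big(1-\frac{2c(1)}{\gamma}\Big)|\ff|,
\]
where I used $c(3)\le 1$ and $n\ge 100k^2$.

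The final step is a case split on the pair $(c(1),c(2))$, exploiting the easy inequality $c(2)\ge c(1)$ (obtained by averaging $|\ff(\{x_0,y\})|$ over $y\ne x_0$ in the link of a max singleton $x_0$). Whenever \eqref{eq8} fails to give $0.49|\ff|$, solving the quadratic $(1-c(1))(1-c(2)/(100c(1)))<0.49$ forces $c(1)$ to be small and $c(2)/c(1)$ to be large; in that regime both $1/(100c(2))$ and $2c(1)/\gamma$ are small, and the pair bound above delivers $d(\ff)\ge 0.49|\ff|$. The main obstacle I expect is the bookkeeping in this last step: one must verify that \eqref{eq8} and the pair bound jointly cover every triple $(c(1),c(2),\gamma)$ consistent with $d(\ff)<0.49|\ff|$, and in a narrow intermediate regime where $c(2)$ itself is also small one may additionally need the Lemma~\ref{lemsimple1} estimate as a third input to plug the gap.
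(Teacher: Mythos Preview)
Your approach has a genuine gap: using only levels $i=1$ (inequality \eqref{eq8}) and $i=2$ (your ``pair bound''), together with Lemma~\ref{lemsimple1}, does \emph{not} cover all parameter regimes. The trouble is the case where $c(1)$ sits at its minimum $\approx 0.51/k$ while $c(2)$ is moderate---large enough to kill Lemma~\ref{lemsimple1} (which needs $c(2)\lesssim 2\gamma/k$ when $n=100k^2$) and to kill \eqref{eq8} (which needs $c(2)/c(1)$ bounded), yet small enough to kill the pair bound (which needs $c(2)\gtrsim 1/50$). Concretely, take $k=2000$, $n=100k^2$, $\gamma$ just above $1$, $c(1)=0.51/k\approx 2.55\times 10^{-4}$, $c(2)=0.015$. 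Then
\[
(1-c(1))\Big(1-\frac{c(2)}{100\,c(1)}\Big)\approx 0.41,\qquad
\Big(1-\frac{1}{100\,c(2)}\Big)(1-2c(1))\approx 0.33,\qquad
\tfrac12\Big(1-\frac{c(2)k}{100}\Big)=0.35,
\]
all below $0.49$. This regime is perfectly consistent with $c(2)\ge c(1)$ and with your lower bound $c(1)\ge 0.51\gamma/k$, so your case split cannot close it; your admission that the ``bookkeeping'' might need Lemma~\ref{lemsimple1} as a third input underestimates the problem, since Lemma~\ref{lemsimple1} also fails here.

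What the paper does, and what you are missing, is to climb higher in the chain. When $c(2)/c(1)$ is large (so \eqref{eq8} is useless), the paper observes that
\[
\frac{c(3)}{c(2)}\cdot\frac{c(4)}{c(3)}\cdots\frac{c(q+2)}{c(q+1)}\le \frac{1}{c(2)}\le 2^q
\]
for the least $q$ with $2^q\ge 1/c(2)$, so some $i\in[2,q+1]$ has $c(i+1)/c(i)\le 2$. Applying \eqref{eq7} at \emph{that} level gives the factor $(1-2k^2/n)\ge 0.98$ regardless of how small $c(2)$ is, and a short estimate shows $|\ff(\bar P)|\ge 0.6|\ff|$ for the corresponding $P$. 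Your scheme of stopping at $i=2$ replaces the controlled ratio $c(i+1)/c(i)\le 2$ by the uncontrolled $c(3)/c(2)\le 1/c(2)$, which is exactly why the pair bound collapses when $c(2)$ is small.
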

\iffalse
\begin{proof} First, note that, in view of Lemma~\ref{lemsimple1}, we may assume that $$c(2)\ge \frac 1{50}.$$ Using \eqref{eq8} as above, we are done for $c(1)\ge \frac 12$ and we can get the bound $d(\ff)\ge (1-c(1))(1-\frac 1{c(1)k})|\ff|$ for $c(1)<\frac 12$. In particular, if $c(1)\ge \frac 1{20}$ then $d(\ff)\ge (1-c(1))(1-\frac 1{50c(1)})|\ff|$. As a function of $c(1)$, this expression is minimized for either $c(1) = \frac 12$ or $\frac 1{20}$. In both cases, we get a value bigger than $0.49|\ff|.$ Thus, we can assume that $$c(1)\le \frac 1{20}.$$
Next, using that $c(8)\le 1$, we have $$c(2)\cdot \frac {c(3)}{c(2)}\cdot \frac {c(4)}{c(3)}\cdot \frac {c(5)}{c(4)}\cdot \frac {c(6)}{c(5)}\cdot \frac {c(7)}{c(6)}\cdot \frac {c(8)}{c(7)}\le 1,$$
and, given that $c(2)\ge \frac 1{50}$, there is $i\in [2,7]$ such that $\frac {c(i+1)}{c(i)}\le 2$. Take $P$, $|P| = i$, such that $|\ff(P)| = c(i){n-i\choose k-i}$. Note that, given that $i\le 7$, we have $|\ff(\bar P)|\ge |\ff|-7c(1){n-1\choose k-1}\ge 0.6|\ff|$. Using \eqref{eq7} and averaging, we get that
$$d(\ff)\ge \Big(1-\frac{2k^2}{n}\Big)|\ff(\bar P)|>\frac 12 |\ff|.$$
\end{proof}

We remark that $n$ being quadratic in $k$ is sufficient in the last case of the proof.
\fi
\begin{proof}
Assume that $c(2)/c(1) \le 5$. If $c(1)\le \frac 12$ then \eqref{eq8} gives $d(\ff)\ge (1-c(1))(1-\frac {c(2)k^2}{c(1)n})|\ff|\ge (1-c(1))(1-\frac {c(2)}{100c(1)})|\ff|$. Given the assumption, it is not difficult to see that this expression is minimized for $c(1) = \frac 12$, in which case we have $d(\ff)\ge \frac 12(1-\frac {c(2)}{50})|\ff| \ge 0.49|\ff|$. % for $n\ge 100is minimized >\frac 12|\ff|$. If $\frac 1
If $c(1)\ge \frac 12$ then $\frac {c(2)}{c(1)}\le 2$ and \eqref{eq8} implies $d(\ff)\ge \frac 12(1-\frac {2k^2}n)|\ff|\ge 0.49|\ff|$.

Assume that $c(2)/c(1)\ge 5$. Define $q$ to be the smallest integer satisfying $2^q\ge \frac 1 {c(2)}$. Since $c(i)\ge 1$  for all $1\le i\le k$ we have
$$\frac {c(3)}{c(2)}\cdot \frac {c(4)}{c(3)}\cdot\ldots\cdot \frac {c(q+2)}{c(q+1)}\le \frac 1{c(2)}\le 2^q.$$
Consequently we can fix $2\le i\le q+1$ such that
$$\frac {c(i+1)}{c(i)}\le 2.$$
By definition, $c(2)<2^{-(q-1)}$. Noting that $2^{q}\ge q+1$ for all $q\ge 0$, $ic(2)\le (q+1)c(2)<(q+1)2^{-(q-1)}\le 2$ follows.

%Then, using that $c(i)\le 1$ for any $i$, we have
%$$c(2)\cdot \frac {c(3)}{c(2)}\cdot \frac {c(4)}{c(3)}\cdot\ldots\cdot \frac {c(j+1)}{c(j)}\le 1,$$
%where $j = 1+\lceil\log \frac 1{c(2)}\rceil$.  Due to the choice of $j$, there must exist $2\le i\le j$, such that $\frac {c(i+1)}{c(i)}\le 2$.
Take $P$, $|P| = i$, such that $|\ff(P)| = c(i){n-i\choose k-i}$. Note that $i\le j = 1+\lceil\log_2 \frac 1{c(2)}\rceil\le \frac 2{c(2)}$, and so we have $|\ff(\bar P)|\ge |\ff|-ic(1){n-1\choose k-1}\ge |\ff|-\frac{ic(2)}5{n-1\choose k-1}\ge |\ff|-\frac 25{n-1\choose k-1}\ge 0.6|\ff|$.
Using \eqref{eq7} and averaging, we get that
$$d(\ff)\ge \Big(1-\frac{2k^2}{n}\Big)|\ff(\bar P)|\ge 0.98\cdot 0.6|\ff|>\frac 12 |\ff|.$$
\end{proof}

\section{Proof of Theorem~\ref{thmnew1}}
%Recall the definition of family $\D$ from the introduction.

The proof is a combination of Lemmas~\ref{lemnew1}, \ref{lemnew2}, and~\ref{lemnew3}.

\begin{lem}\label{lemnew1}
For any $c\in \N$ there exists $k_0$ such that for any $k\ge k_0, n\ge k^2+k$ and a family $\G\subset {[n]\choose k}$ of size at least ${n-c\choose k-c}$, we can find a collection $\G'\subset \G$ of $k^c$ sets such that $|G_1\cap G_2|\le \log k$ for any $G_1,G_2\in \G'$.
\end{lem}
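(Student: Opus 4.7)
I would prove the lemma by a straightforward greedy construction. Set $s := \lfloor \log k\rfloor + 1$, the smallest integer strictly greater than $\log k$. Build $\G' = \{G_1, G_2, \ldots\} \subseteq \G$ one set at a time: at step $t+1$, pick any $G_{t+1} \in \G$ with $|G_{t+1} \cap G_i| \le \log k$ for every $i \le t$. The only thing to verify is that this process does not stall before $t$ reaches $k^c$.

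The number of $k$-sets in ${[n]\choose k}$ that intersect a fixed $G_i$ in at least $s$ elements is at most ${k\choose s}{n-s\choose k-s}$: first choose an $s$-subset of $G_i$ contained in the intersection, then extend it to a $k$-set. Union-bounding over the at most $k^c$ previously chosen sets, the total count of ``forbidden'' sets at any step is at most $k^c {k\choose s}{n-s\choose k-s}$, and it suffices to prove this is strictly less than $|\G| \ge {n-c\choose k-c}$.

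The key inequality is therefore
$$k^c {k\choose s}{n-s\choose k-s} < {n-c\choose k-c}.$$
Writing ${n-s\choose k-s}/{n-c\choose k-c} = \prod_{i=c}^{s-1}(k-i)/(n-i)$ and using $n \ge k^2 + k$, each factor satisfies $(k-i)/(n-i) \le 1/k$, so the ratio is at most $k^{-(s-c)}$. Combined with ${k\choose s}\le k^s/s!$, the left-hand side divided by ${n-c\choose k-c}$ is at most $k^{2c}/s!$. Since $s\ge \log k$, Stirling gives $s! \ge (\log k/e)^{\log k}$, which exceeds $k^{2c}$ whenever $\log\log k > 2c + O(1)$, i.e.\ for $k \ge k_0(c)$ sufficiently large.

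The main obstacle, and essentially the only one, is finding a threshold $s$ that is small enough to satisfy the conclusion $|G_1\cap G_2|\le \log k$ but still large enough that the superexponential growth of $s!$ overpowers both the union bound factor $k^c$ and the ${k\choose s}$ loss from choosing the intersection inside $G_i$. The hypothesis $n\ge k^2+k$ is exactly what makes each factor $(k-i)/(n-i)$ at most $1/k$, which provides the decisive $k^{-(s-c)}$ saving that closes the argument.
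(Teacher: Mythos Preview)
Your proof is correct. The key inequality $k^{2c}/s! < 1$ for $s \ge \log k$ and $k$ large is exactly the same engine that drives the paper's argument, and your use of $n \ge k^2+k$ to get each factor $(k-i)/(n-i)\le 1/k$ is clean.

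The paper, however, proves the lemma probabilistically rather than greedily: it samples each set of $\G$ independently with probability $p = 2k^c/|\G|$, argues that with probability $\ge 1/2$ the sample has at least $k^c$ sets, and separately bounds the expected number of ``bad'' pairs (pairs with intersection $>\log k$) by $p^2|\G|D$ where $D \le \frac{k^c}{(\log k)!}\binom{n-c}{k-c}$; this expectation is below $1/2$, so a good sample exists. Your greedy argument is more elementary---no randomness, no concentration or Markov---and arguably more transparent, since it isolates a single inequality to check. The probabilistic method has the mild advantage of being more robust to variations (e.g.\ it adapts easily if one wants several properties simultaneously), and indeed the paper's Lemma~\ref{lemnew1v2} pushes the intersection bound down to $4c$ via a different, algebraic construction; your greedy bound would not reach that far because $\binom{k}{s}$ becomes too large for constant $s$. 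For the present statement, though, both routes are equally valid and of comparable length.
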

\begin{proof}
  The proof is a simple probabilistic argument. Form a family $\G'\subset \G$ by including each set from $\G$ into $\G'$ independently with probability $p:=\frac {2k^c}{|\G|}$. It is an easy calculation to see that the number of sets in $\G'$ is at least $\frac {k^c}{|\G|}$ with probability at least $1/2$.

  Now let us calculate the expected number of pairs $A,B\in \G'$ that intersect in at least $1+\log k$ elements. The number of such pairs in $\G$ is at most $|\G|\cdot D$, where {\small $$D:= \sum_{i=1+\log k}^k{k\choose i}{n-k\choose k-i}\le {k\choose \log k}{n-k\choose k-\log k}\le \frac {k^{\log k}}{(\log k)!} \cdot \frac {k^{\log k-c}}{(n-k)^{\log k-c}}{n-c\choose k-c}\le \frac {k^c}{(\log k)!}{n-c\choose k-c}.$$} Remark that $(\log k)!\gg k^{C}$ for any constant $C$, provided $k$ is large enough. Thus, the expected number of pairs of pairs $A,B$ as above is at most
  $$p^2|\G|D\le \frac{4k^{3c}}{(\log k)!}<\frac 12,$$
  provided $k$ is large enough. Thus, by Markov's inequality, with probability strictly greater than $1/2$, there are no such pairs in $\G'$. Fix such a choice of $\G'$ that satisfies both properties simultaneously.
\end{proof}
Using a bit of algebra, we can get a stronger statement. We do not require it for the proof since it would improve the bounds on $k$ only slightly, but decided to keep it for the interested reader.
\begin{lem}\label{lemnew1v2}
For any $c\in \N$ there exists $k_0$ such that for any $k\ge k_0,\ n\ge 2k^2$ and a family $\G\subset {[n]\choose k}$ of size at least ${n-c\choose k-c}$, we can find a collection $\G'\subset \G$ of $k^c$ sets such that $|G_1\cap G_2|\le 4c$ for any $G_1,G_2\in \G'$.
\end{lem}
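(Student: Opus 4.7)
My plan is to mirror the proof of Lemma~\ref{lemnew1}, replacing the intersection threshold $1 + \log k$ by $4c+1$ and exploiting the strengthened hypothesis $n \ge 2k^2$ (versus $n \ge k^2 + k$) to make the combinatorial estimates tight enough. First, I would form $\G' \subset \G$ by including each set independently with probability $p = 2k^c/|\G|$; a standard second-moment (Chebyshev) calculation then shows that $|\G'| \ge k^c$ with probability at least, say, $2/3$, provided $k_0$ is chosen large enough in terms of $c$ to make $k^c$ sufficiently big.

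The heart of the proof is a tail estimate on the ``bad-neighbor count''
\[
D \;=\; \sum_{i = 4c+1}^{k}\binom{k}{i}\binom{n-k}{k-i}.
\]
Under $n \ge 2k^2$, consecutive summands satisfy $f(i+1)/f(i) = (k-i)^2/((i+1)(n-2k+i+1)) \le 1/(8c+4)$ for $i \ge 4c+1$, so the tail decays geometrically and $D \le 2\binom{k}{4c+1}\binom{n-k}{k-4c-1}$. The decisive inequality---the ``bit of algebra'' the lemma alludes to---is
\[
\binom{k}{4c+1}\binom{n-k}{k-4c-1}
\;\le\; \frac{k^{4c+1}}{(4c+1)!}\cdot\frac{k^{3c+1}}{(n-k)^{3c+1}}\binom{n-c}{k-c}
\;\le\; \frac{k^{c}}{(4c+1)!}\binom{n-c}{k-c},
\]
where the last step uses $(n-k)^{3c+1} \ge k^{6c+2}$, valid for $n \ge 2k^2$ and $k$ large (indeed, $n-k \ge k^2$). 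Feeding this bound on $D$ into the usual calculation bounds the expected number of pairs $\{G_1, G_2\} \subset \G'$ with $|G_1 \cap G_2| \ge 4c+1$, and an alteration argument (delete one endpoint of each bad pair) then produces a subfamily free of bad pairs.

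The main obstacle I expect is that, in contrast with Lemma~\ref{lemnew1}, the factorial $(4c+1)!$ is only a constant and does not beat arbitrary powers of $k$ the way $(\log k)!$ does. To absorb the residual $k^{c}$ factor, I would leverage the fact that $|\G| \ge \binom{n-c}{k-c}$ is super-polynomially large in $k$ once $n \ge 2k^2$, and choose $k_0$ large enough (polynomially in $(4c+1)!$ and $2^{3c}$) so that the alteration step still leaves at least $k^{c}$ surviving sets. The resulting $\G' \subset \G$ then has $|\G'| \ge k^{c}$ and $|G_1 \cap G_2| \le 4c$ for every $G_1, G_2 \in \G'$, as required.
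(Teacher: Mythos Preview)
Your approach has a genuine gap: the probabilistic deletion argument cannot be salvaged at the threshold $4c+1$, no matter how you choose $k_0$ or $p$. Your upper bound on $D$ is correct, but it is also essentially tight. Indeed, for $n=2k^2$ the leading term alone satisfies
\[
\binom{k}{4c+1}\binom{n-k}{k-4c-1}\Big/\binom{n}{k}\;\sim\;\frac{1}{(4c+1)!}\Big(\frac{k^2}{n}\Big)^{4c+1}e^{-k^2/n}\;=\;\Theta_c(1),
\]
while $\binom{n-c}{k-c}/\binom{n}{k}\sim (k/n)^c=\Theta_c(k^{-c})$. Hence $D=\Theta_c\bigl(k^c\binom{n-c}{k-c}\bigr)$. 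When $|\G|=\binom{n-c}{k-c}$, this means $D=\Theta_c(k^c|\G|)$, so for \emph{any} $p$ the alteration bound $p|\G|-p^2|\G|D$ is maximised at $p=1/(2D)$ and yields at most $|\G|/(4D)=O_c(k^{-c})<1$ surviving sets. The ``super-polynomial size of $|\G|$'' does not help: what matters is the ratio $|\G|/D$, and at the minimum allowed size this ratio is $\Theta_c(k^{-c})$. In short, the constant-threshold analogue of Lemma~\ref{lemnew1} simply fails; $(\log k)!$ beating every power of $k$ was essential there.

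The paper therefore takes a completely different route. It fixes a prime power $q$ with $k\le q<n/k$ and uses the Reed--Solomon-type family $\G(d)=\{G(f):\deg f\le d\}$ of graphs of polynomials over $GF(q)$ restricted to a $k$-subset $U$; any two such graphs meet in at most $d$ points and $|\G(d)|=q^{d+1}$. A random injection $\phi:U\times GF(q)\to[n]$ then gives $\E\bigl[|\phi(\G(d))\cap\G|\bigr]=|\G|q^{d+1}/\binom{n}{k}$, and with $d=4c$ and $q\sim n/k$ this expectation exceeds $k^c$. The algebraic packing is what replaces the failed random-sampling step.
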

\begin{proof}
  Fix integer $d>0$. Take the largest prime power $q$ that satisfies $k\le q<\frac nk$. %For simplicity, let $k$ be prime and
  Consider $GF(q)$ and take some set $U\subset GF(q)$ of size $k$. Consider the graphs of all polynomials of degree at most $d$ in $U\times GF(q)$:
  $$G(f):=\{(x,f(x)): x\in U\}\subset U\times GF(q).$$
  For $f'\ne f$, $|G(f)\cap G(f')|\le d$ is obvious. Consider the family $\G(d)$ of all such $G(f)$. Then $|\G(d)| = q^{d+1}$.
  Next, consider a random injection $\phi: U\times GF(q)\to [n]$. This defines the family $\phi(\G(d)):= \{\phi (G): G\in \G(d)\}.$ By linearity of expectation,
  $$\E[|\phi(\G(d))\cap \G|] = \frac{|\G|q^{d+1}}{{n\choose k}} =:y.$$
  Note that $y\sim \frac {k^cq^{d+1}}{n^c}$ and that $q\sim \frac nk$ for large $k$. Thus, clearly $y\ge k^c$ for $d\ge 4c$. We get that there is a choice of $\phi$ that gives at least $k^c$ images of sets from $\G(d)$ in $\G$. Then $\G':=\phi(\G(d))$ is the desired subfamily.
\end{proof}

We need the following easy claim for the next lemma.
%\begin{cla}\label{clanew1} Let $n\ge k\ell$, where $n, k,\ell$ are integers. Then, for any given set $G\in {[n]\choose k}$, at most $\frac {k\ell }n$-proportion  of sets from ${[n]\choose \ell}$ intersect $G$.
%\end{cla}
%\begin{proof}
%  The proof is a simple calculation. The number of sets intersecting $G$ is ${n-1\choose \ell-1}+\ldots+{n-k\choose \ell-1}\le k{n-1\choose \ell-1} = \frac {k\ell}n{n\choose \ell}$.
  %, and we have $${n\choose k}/{n-k\choose k} = \prod_{i=0}^{k-1}\frac {n-i}{n-k-i}\le \Big(1+\frac {k}{n-2k}\Big)^k\le e^{\frac {k^2}{n-2k}}\le \min\Big\{4,1+\frac {2k^2}{n}\Big\},$$
%provided $k\ge 10$.
%\end{proof}
\begin{cla}\label{clanew2}
  Fix positive integers $n,k,s$ and let $n\ge k^2$. Assume that $F_1,\ldots, F_s$ are pairwise disjoint sets of size at most $k$. Then the proportion of $k$-element sets intersecting each of $F_1,\ldots, F_s$ is at most $\big(\frac {k^2}n\big)^s$.
\end{cla}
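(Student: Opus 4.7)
The plan is to prove this by a one-line double counting argument combined with a ratio-of-binomials estimate. Since the $F_i$ are pairwise disjoint and any $k$-set meeting all of them must contain at least one element from each $F_i$, the case $s>k$ is vacuous, so I may assume $s\le k$.

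First, I would bound the number of $k$-sets $A\in\binom{[n]}{k}$ meeting every $F_i$ by counting tuples $(x_1,\ldots,x_s,B)$ with $x_i\in F_i$ and $B\in\binom{[n]\setminus\{x_1,\ldots,x_s\}}{k-s}$. Pairwise disjointness of the $F_i$ makes the $x_i$ automatically distinct, and every valid $A$ is covered (pick any $x_i\in A\cap F_i$ and set $B=A\setminus\{x_1,\ldots,x_s\}$), so
$$\Big|\Big\{A\in\binom{[n]}{k}:A\cap F_i\ne\emptyset\text{ for all }i\Big\}\Big|\le |F_1|\cdots|F_s|\binom{n-s}{k-s}\le k^s\binom{n-s}{k-s},$$
where any overcounting only strengthens the inequality.

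Second, I would divide by $\binom{n}{k}$ and simplify using
$$\frac{\binom{n-s}{k-s}}{\binom{n}{k}}=\prod_{i=0}^{s-1}\frac{k-i}{n-i}\le\Big(\frac{k}{n}\Big)^s,$$
where the inequality holds factor-by-factor because $k\le n$ implies $(k-i)/(n-i)\le k/n$ for every $i\ge 0$. Multiplying this with the first step yields exactly the claimed bound $(k^2/n)^s$.

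There is no real obstacle here; this is a routine estimate, and in particular the hypothesis $n\ge k^2$ does not appear to be needed in the proof of the claim itself (it is presumably imposed for consistency with the applications in the subsequent lemmas, where disjoint $F_i$ of size $\le k$ force $s\le n/k$, etc.).
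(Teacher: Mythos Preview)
Your proof is correct and is essentially identical to the paper's: count at most $k^s$ transversals $T=\{x_1,\ldots,x_s\}$, each contained in $\binom{n-s}{k-s}\le (k/n)^s\binom{n}{k}$ members of $\binom{[n]}{k}$. You simply spell out more details (the vacuous case $s>k$, the factor-by-factor bound on the binomial ratio, and the observation that $n\ge k^2$ is not used in the claim itself).
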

\begin{proof}
  There are at most $k^s$ transversal sets $T$ with $|T| = s$, $|T\cap F_i| = 1$ for all $i$. Each transversal $T$ is contained in ${n-s\choose k-s}<{n\choose k}\cdot \Big(\frac kn\Big)^s$ sets from ${[n]\choose k}$.
\end{proof}

%\begin{proof}
%  The proof is by induction on $s$. For $s=1$ this is given by Claim~\ref{clanew1}. Now assume that we have a family $\mathcal W\subset {[n]\choose k}$ of all sets that intersect $F_1,\ldots, F_{s-1}$. Fix a particular intersection pattern $P'\subset F_1\cup\ldots F_{s-1}$ with these sets and consider a subfamily $\mathcal W_{P}:= \{A\setminus P: A\in \mathcal W, A\cap (F_1\cup \ldots\cup F_{s-1}) = P\}.$ Then, clearly, the uniformity of all sets in $\mathcal W_{P}$ is the same and it is $w\le k-s+1$. Moreover,  $X:=[n]\setminus(F_1\cup\ldots F_{s-1}$ satisfies $|X| \ge n-(s-1)k$. Applying Claim~\ref{clanew1} to $\mathcal W_{P}$, we get that at most a $\frac {w |F_s|}{|X|}\le \frac {(k-s+1)k}{n-(s-1)k}\le \frac {k^2}n$ proportion of sets from  $\mathcal W_{P}$ intersect $F_s$. Since this is true for any choice of $P$, the number of sets intersecting $F_1,\ldots, F_s$ is at most a $\frac {k^2}n$-proportion of those intersecting $F_1,\ldots, F_{s-1}$, and the claim follows.
%\end{proof}
\begin{lem}\label{lemnew2}
  In the requirements of Theorem~\ref{thmnew1}, assume that there exists $S$ such that $|\ff(\bar S)|\ge {n-4\choose k-4}$ and, moreover, for any $i\in [n]\setminus S$ we have $|\ff(i)|\le \frac {|\ff|}{100(t\log k)^3}$. Then there is a set $F\in \ff(\bar S)$ that is disjoint with more than $\frac {(t-1)|\ff|}t$ sets from $\ff$.
\end{lem}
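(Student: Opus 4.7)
The plan is to apply Lemma~\ref{lemnew1} with $c=4$ to the family $\ff(\bar S)$ (of size at least $\binom{n-4}{k-4}$ by hypothesis), producing a subfamily $\G' \subset \ff(\bar S)$ with $|\G'| = k^4$ and $|F \cap F'| \le \log k$ for every two distinct $F, F' \in \G'$. The desired set $F$ will be located inside $\G'$ by a contradiction/averaging argument.

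Suppose for contradiction that every $F \in \G'$ satisfies $|\ff \setminus \ff(\bar F)| \ge |\ff|/t$. Summing over $F \in \G'$ and swapping the order of summation,
\[
\frac{k^4 |\ff|}{t} \le \sum_{F \in \G'}|\ff \setminus \ff(\bar F)| = \sum_{G \in \ff}\bigl|\{F \in \G' : F \cap G \ne \emptyset\}\bigr|.
\]
Since $F \cap S = \emptyset$ for every $F \in \G' \subset \ff(\bar S)$, the inner count is at most $\sum_{x \in G \setminus S} |\G'_x|$; summing over $G$ and using the singleton-degree bound $|\ff(x)| \le M := |\ff|/(100(t\log k)^3)$ for $x \notin S$ gives $M\cdot k|\G'| = k^5|\ff|/(100(t\log k)^3)$. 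Comparison with the assumed lower bound reduces to $k \le 100 t^2 (\log k)^3$, which is consistent with---not contradictory to---the standing hypothesis $(10 t\log k_0)^7 \le k_0$, so a naive union bound alone is too weak.

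To close this gap, I would use the pairwise near-disjointness of $\G'$. For distinct $F, F' \in \G'$, $|F \cap F'| \le \log k$ yields $\sum_x \binom{|\G'_x|}{2} \le \binom{|\G'|}{2}\log k$, hence $\sum_x |\G'_x|^2 \le k|\G'| + |\G'|^2 \log k$, a tight $\ell^2$-control on the degree vector $(|\G'_x|)_x$. Applying Cauchy--Schwarz to $\sum_{x \notin S}|\G'_x|\,|\ff(x)|$ together with $\sum_{x \notin S}|\ff(x)|^2 \le M \cdot k|\ff|$---or equivalently extracting from $\G'$ a sub-collection of $\Theta(\log k)$ almost-pairwise-disjoint sets and invoking Claim~\ref{clanew2} on them---should yield an improved upper bound on $\sum_{G \in \ff}\bigl|\{F \in \G' : F \cap G \ne \emptyset\}\bigr|$ that contradicts the lower bound above and hence falsifies the assumption that every $F \in \G'$ is bad.

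The main obstacle is precisely this quantitative step: the union bound is off by a polynomial factor in $k$, and the role of Lemma~\ref{lemnew1} (together with the $(\log k)^3$-factor in the hypothesis on $|\ff(i)|$) is to supply this saving through second-moment control on $(|\G'_x|)_x$. Getting the saving to close the full factor $k/(100 t^2(\log k)^3)$ may require iterating the argument, combining several moment estimates, or splitting into cases according to the global density $|\ff|/\binom{n}{k}$ (which is bounded below by $|\D|/\binom{n}{k} \sim 1/(t^4 k)$ via the standing hypothesis).
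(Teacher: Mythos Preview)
Your proposal is incomplete, and the fixes you suggest do not close the gap.  The missing idea is not a second-moment estimate but a change of scale: instead of working with all $k^4$ sets produced by Lemma~\ref{lemnew1}, keep only $10t\log k$ of them.  With this small $\G'$ the set
\[
I:=\bigcup_{A\ne B\in \G'} A\cap B
\]
has at most $\binom{10t\log k}{2}\log k\le 50t^2\log^3 k$ elements, which is \emph{precisely} the quantity that the hypothesis $|\ff(i)|\le |\ff|/(100(t\log k)^3)$ is calibrated against: the number of $G\in\ff$ meeting $I$ is at most $|I|\cdot\max_{i\notin S}|\ff(i)|\le |\ff|/(2t)$.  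Outside $I$ the sets $A\setminus I$ for $A\in\G'$ are \emph{pairwise disjoint}, so Claim~\ref{clanew2} (together with $n\ge 16t^2k^2$) shows that the number of $G\in{[n]\choose k}$ meeting at least $5\log k$ of them is negligible compared with ${n-4\choose k-4}$.  Hence at least a $(1-\tfrac{1}{2t})$-fraction of $\ff$ hits at most a $\tfrac{5\log k}{10t\log k}=\tfrac{1}{2t}$-fraction of $\G'$, and the double-counting you already set up then yields an $F\in\G'$ disjoint from more than $(1-\tfrac1t)|\ff|$ sets.

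Your Cauchy--Schwarz route with $|\G'|=k^4$ does not work: from $\sum_x|\G'_x|^2\le k|\G'|+|\G'|^2\log k\sim k^8\log k$ and $\sum_x|\ff(x)|^2\le Mk|\ff|$ one gets $\sum_x|\G'_x||\ff(x)|\lesssim k^4\sqrt{\log k}\cdot\sqrt{Mk|\ff|}$, and requiring this to be below $k^4|\ff|/t$ forces $k\lesssim t\log^2 k$, the same polynomial-in-$k$ shortfall as the union bound.  The point is that the $(t\log k)^3$ in the hypothesis is the cube of the \emph{size of $\G'$}, not of $k$; shrinking $\G'$ to $\Theta(t\log k)$ sets is what makes the numbers match.
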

\begin{proof}
  Apply Lemma~\ref{lemnew1} and get a family $\G'\subset \ff(\bar S)$ of $10 t\log k$ sets with pairwise intersections of size at most $\log k$. Then the set $I = \bigcup_{A\ne B\in \G'} A\cap B$ has size at most $50 t^2 \log^3 k$. Given the condition on $|\ff(i)|$ for each $i\in[n]\setminus S$, at most $\frac {50t^2\log^3 k}{100(t\log k)^3}|\ff| = \frac 1{2t}|\ff|$ sets from $\ff$ intersect $I$.

  Let us next bound the total number of sets in ${[n]\choose k}$ intersecting at least $5\log k$ sets from $\G'$. For a fixed choice of $\ell\ge 5\log k$ sets, Claim~\ref{clanew2} asserts that the proportion of such sets in ${[n]\choose k}$ is at most $\Big(\frac {k^2}n\Big)^{\ell}$. At the same time, there are ${10t\log k\choose \ell}\le \Big(\frac {10 et\log k}{\ell}\Big)^{\ell}\le (2et)^{\ell}$ possible subsets of $\G'$, and so we can bound the number of all such sets in ${[n]\choose k}$ by
   \begin{equation}\label{eqnew3}
     {n\choose k}\cdot\sum_{\ell=5\log k}^{10t\log k}\Big(\frac {2et k^2}n\Big)^{\ell} \le {n\choose k}\cdot\begin{cases}
       2^{-5\log k}=k^{-5}, & \mbox{if } n\le k^{2.2} \\
       n^{-4} & \mbox{otherwise}.
     \end{cases}
   \end{equation}

%   \begin{equation}\label{eqnew3}
%     {n\choose k}\cdot\Big(\frac {2^{5/2} k^2}n\Big)^{40\log k} \le {n\choose k}\cdot\begin{cases}
%       2^{-40\log k}=k^{-40}, & \mbox{if } n\le k^3 \\
%       n^{-10} & \mbox{otherwise}.
%     \end{cases}
%   \end{equation}
  Note that we used the condition $n\ge 16t k^2$ in the first case. %(And this is the only place where we actually need to use it.)
  It is evident that in both cases the number of such sets is much smaller than ${n-4\choose k-4}$.

  Thus, the majority (at least $(1-\frac 1{2t})$-proportion, excluding those intersecting $I$) of sets from $\ff$ intersect at most $\frac 1{2t}$-proportion of the sets from $\G'$. Via simple double counting, it is clear that one of the sets from $\G'$ intersects at most a $\frac 1t$-fraction of sets from $\ff$.
\end{proof}
%{\bf Case $\tau (\ff) = t$. } The same proof with slightly different calculations allows to prove the following lemma:
%\begin{lem}\label{lemnew2}
%  In the assumptions of Theorem~\ref{thmnew1}, assume that there exists $S$ such that $|\ff(\bar S)|\ge {n-4\choose k-4}$ and, moreover, for any $i\in [n]\setminus S$ we have $|\ff(i)|\le \frac {|\ff|}{100(t\log k)^3}$. Then there is a set $F\in \ff(\bar S)$ that is disjoint with more than $\frac {(t-1)|\ff|}t$ sets from $\ff$.
%\end{lem}

\begin{cla}\label{clanew3} In the assumptions of Theorem~\ref{thmnew1}, there are fewer than $(10t\log k)^3$ elements $i\in [n]$ such that $|\ff(i)|\ge \frac{|\ff|}{100 (t\log k)^3}$.
\end{cla}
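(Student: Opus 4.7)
The plan is to argue by contradiction: suppose the set $P:=\{i\in[n]: |\ff(i)|\ge \alpha|\ff|\}$ with $\alpha := 1/(100(t\log k)^3)$ has cardinality at least $N:=(10t\log k)^3$. I will freely assume that $d(\ff)<(1-1/t)|\ff|$, since otherwise the first conclusion of Theorem~\ref{thmnew1} already holds and no bound on $|P|$ is needed.

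The core of the argument is to apply Lemma~\ref{lemnew2} with $S:=P$. The hypothesis that $|\ff(i)|\le \alpha|\ff|$ for every $i\in[n]\setminus S$ is automatic from the definition of $P$, so only the condition $|\ff(\bar P)|\ge \binom{n-4}{k-4}$ remains to be checked. If it holds, Lemma~\ref{lemnew2} produces $F\in\ff(\bar P)$ disjoint from more than $(t-1)|\ff|/t$ sets of $\ff$, which forces $d(\ff)\ge (1-1/t)|\ff|$ and contradicts our working assumption.

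The remaining sub-case is $|\ff(\bar P)|<\binom{n-4}{k-4}$. Here the plan is to iteratively refine $P$: peel off elements in order of decreasing $|\ff(i)|$ (so as to increase $|\ff(\bar{S'})|$ as quickly as possible), obtaining a chain of subsets $P=S_0\supsetneq S_1\supsetneq\cdots$ until for some first index $j$ one has $|\ff(\bar{S_j})|\ge \binom{n-4}{k-4}$. This $S_j$ is then fed back into Lemma~\ref{lemnew2} — its second hypothesis only becomes easier upon removing elements — and the argument of the previous paragraph closes the contradiction. The lower bound $|\ff|\ge|\D|\ge k\binom{n-2}{k-2}\gg \binom{n-4}{k-4}$ guarantees that the pruning terminates after removing not too many elements, so one still has $|S_j|\ge 1$ (and indeed $|S_j|$ not much smaller than $|P|$).

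The main obstacle is this second sub-case. The trivial double count $\sum_{i\in P}|\ff(i)|\le k|\ff|$ yields only $|P|\le 100k(t\log k)^3$, a factor of $k/10$ weaker than the claim, so a purely arithmetic bound is insufficient. Closing this gap requires the pruning argument above, together with careful control of the overlaps among the stars $\{\ff(i):i\in P\}$: in this sub-case those stars must cover almost all of $\ff$, forcing substantial pairwise overlaps, and quantifying this overlap via inclusion-exclusion is where the bulk of the technical work lies.
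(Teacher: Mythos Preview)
Your pruning step in the second sub-case is broken. Lemma~\ref{lemnew2} requires that \emph{every} $i\in[n]\setminus S$ satisfy $|\ff(i)|\le \alpha|\ff|$. When you pass from $P$ to a proper subset $S_j\subsetneq P$, the discarded elements now lie in $[n]\setminus S_j$, and each of them, being a member of $P$, has $|\ff(i)|\ge\alpha|\ff|$. So the second hypothesis of Lemma~\ref{lemnew2} does \emph{not} become easier upon shrinking $S$ --- it fails outright as soon as you remove a single element of $P$. The sentence ``its second hypothesis only becomes easier upon removing elements'' is simply false.

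More importantly, you are dramatically overcomplicating the claim. You write that ``a purely arithmetic bound is insufficient'' and that inclusion--exclusion ``is where the bulk of the technical work lies,'' but in fact one line of inclusion--exclusion \emph{is} the entire proof, with no case split, no appeal to Lemma~\ref{lemnew2}, and no assumption on $d(\ff)$. Take any $S\subset P$ with $|S|=N=(10t\log k)^3$; then
\[
|\ff|\ \ge\ \sum_{i\in S}|\ff(i)|\ -\ \sum_{\{i,j\}\in\binom{S}{2}}|\ff(\{i,j\})|\ \ge\ N\alpha\,|\ff|\ -\ N^2\binom{n-2}{k-2}\ =\ 10|\ff|-(10t\log k)^6\binom{n-2}{k-2}.
\]
The standing hypothesis $(10t\log k)^7\le k$ gives $(10t\log k)^6<k$, and $|\ff|\ge|\D|>\tfrac{k}{2}\binom{n-2}{k-2}$, so the subtracted term is below $2|\ff|$. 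This yields $|\ff|>8|\ff|$, a contradiction. That is exactly the paper's proof: the numbers $N$ and $\alpha$ are rigged so that $N\alpha=10$ while $N^2$ stays below $k$, and the lower bound $|\ff|\ge|\D|$ converts $k\binom{n-2}{k-2}$ into $O(|\ff|)$. Your trivial double count $\sum|\ff(i)|\le k|\ff|$ was indeed too weak, but the Bonferroni lower bound immediately repairs it.
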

\begin{proof} Assume that there is a set $S$ of such elements of size $(10t\log k)^3$. By the inclusion-exclusion principle,
$$|\ff|\ge \sum_{i\in S}|\ff(i)|- \sum_{i\ne j\in S}|\ff(\{i,j\}|\ge 10|\ff|- |S|^2 {n-2\choose k-2}.$$
Note that $|S|^2 = (10t\log k)^6$ is less than $k$, on the other hand, applying Claim~\ref{clanew2} to $\D$ we infer $|\ff|>|\D|>\frac k2{n-2\choose k-2}$. This shows that the right hand side of the displayed inequality is more than $8|\ff|+\big(2|\ff|-k{n-2\choose k-2}\big)>8|\ff|$, giving the contradiction $|\ff|>8|\ff|$.
%since $|\ff|\ge |\D|\ge \frac k2{n-2\choose k-2}$ (this estimate on $|\D|$ follows from and $(10 t\log k)^6 \le k$.
\end{proof}

Combining Claim~\ref{clanew3} and Lemma~\ref{lemnew2}, we immediately get the following corollary.

\begin{cor}\label{cornew1}
  If $\ff$ satisfies the requirements of Theorem~\ref{thmnew1} and minimizes $d(\ff)$ for fixed $|\ff|$ then there exists a set $S$ of size at most $(10t\log k)^3$ such that $|\ff(\bar S)|\le {n-4\choose k-4}$.
\end{cor}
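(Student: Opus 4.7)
The plan is to take for $S$ the set of ``heavy'' elements
$$S := \Big\{i \in [n] : |\ff(i)| \ge \tfrac{|\ff|}{100(t\log k)^3}\Big\}.$$
Claim~\ref{clanew3} immediately bounds $|S| < (10t\log k)^3$, which takes care of the size requirement. So the whole content of the corollary reduces to checking that $|\ff(\bar S)| \le {n-4\choose k-4}$.

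I would argue this by contradiction. Suppose instead that $|\ff(\bar S)| > {n-4\choose k-4}$. By the very definition of $S$, every $i\in[n]\setminus S$ satisfies $|\ff(i)| < |\ff|/(100(t\log k)^3)$, so the hypothesis of Lemma~\ref{lemnew2} is in force. That lemma then delivers some $F \in \ff(\bar S)$ disjoint from more than $(t-1)|\ff|/t$ sets of $\ff$, forcing $d(\ff) > (1 - 1/t)|\ff|$.

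To turn this into a contradiction I invoke the minimality hypothesis: I need to exhibit a single family $\G \subset {[n]\choose k}$ of size $|\ff|$ whose maximum degree lies below $(1-1/t)|\ff|$, for then $\ff$ cannot actually be a minimizer. For $|\ff| = |\D|$ the example $\D$ from the introduction does the job, since $d(\D)=1$. For general $|\ff| \ge |\D|$ in the allowed range $n \ge 16t^2k^2$, padding the star $\{F : 1 \in F\}$ with a few carefully chosen extra sets produces a family whose maximum degree is of order at most ${n-k-1\choose k-1}$; this is far below $(1 - 1/t)|\ff|$ because $|\ff| \ge |\D| \ge k{n-k\choose k-2}$ is already huge compared to ${n-k-1\choose k-1}$ under our assumption on $n$.

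The only nontrivial step beyond a mechanical combination of Claim~\ref{clanew3} with Lemma~\ref{lemnew2} is this production of a low-degree comparator family across the full range of admissible $|\ff|$; that is the only place where minimality actually gets used, and once it is in hand the corollary follows ``immediately'' as the authors indicate.
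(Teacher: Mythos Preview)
Your scheme—let $S$ be the set of heavy elements, bound $|S|$ via Claim~\ref{clanew3}, then apply Lemma~\ref{lemnew2} to force $d(\ff)>(1-\tfrac1t)|\ff|$ whenever $|\ff(\bar S)|>{n-4\choose k-4}$—is exactly the paper's, which indeed declares the corollary ``immediate'' from those two ingredients.

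Your attempt to close the loop via an explicit comparator, however, contains a reversed inequality. You assert that $|\D|\ge k{n-k\choose k-2}$ is ``huge compared to ${n-k-1\choose k-1}$,'' but
\[
\frac{k{n-k\choose k-2}}{{n-k-1\choose k-1}}
=\frac{k(k-1)(n-k)}{(n-2k+1)(n-2k+2)}
\sim\frac{k^2}{n}\le\frac{1}{16t^2},
\]
so in this regime it is ${n-k-1\choose k-1}$ that dominates, not $|\D|$. Consequently, for $|\ff|$ near $|\D|$ your padded-star comparator has degree far \emph{above} $(1-\tfrac1t)|\ff|$, not below—and in any case it has size at least ${n-1\choose k-1}>|\ff|$, so it is not even of the correct cardinality. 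The entire range $|\D|<|\ff|<{n-1\choose k-1}$ is left uncovered.

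A simple repair: for $|\ff|\le{n-1\choose k-1}$ take any $|\ff|$-subset of a star (degree $0$; Theorem~\ref{thmnew1} does not require $\ff$ to be non-intersecting), and for larger $|\ff|$ use $\mathcal L(|\ff|)$ or the families $\mathcal W_\ell$. Alternatively—and this is what the paper tacitly does—read ``minimizes $d(\ff)$'' as ``falls into the second alternative of Theorem~\ref{thmnew1},'' i.e.\ $d(\ff)<(1-\tfrac1t)|\ff|$; then Lemma~\ref{lemnew2} gives the contradiction directly and no comparator is needed at all.
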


\begin{lem}\label{lemnew3}
  If $\ff, S$ are as in Corollary~\ref{cornew1} then $|S|\le t$.
\end{lem}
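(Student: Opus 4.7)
The plan is to argue by contradiction: assume $|S|>t$ and derive $d(\ff)\ge(1-1/t)|\ff|$, contradicting the implicit setting $d(\ff)<(1-1/t)|\ff|$ under which Corollary~\ref{cornew1} is invoked (otherwise the first alternative of Theorem~\ref{thmnew1} is already in force). Replace $S$ by a minimum set with the properties of Corollary~\ref{cornew1}. By minimality, for any proper subset $S'\subsetneq S$ we must have $|\ff(\bar{S'})|>\binom{n-4}{k-4}$, so in particular, for any $t$-subset $T\subset S$, $|\ff(\bar T)|>\binom{n-4}{k-4}$. I would take $T$ to consist of the $t$ elements of $S$ with the largest star sizes $|\ff(\cdot)|$, so that the elements of $S\setminus T$ are as light as possible among the heavy ones.

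Next I would mimic the argument of Lemma~\ref{lemnew2} with $T$ in place of $S$. Apply Lemma~\ref{lemnew1} (with $c=4$) to $\ff(\bar T)$ to obtain $\G'\subset\ff(\bar T)$ of size $k^4$ with pairwise intersections at most $\log k$. Let $I:=\bigcup_{A\ne B\in\G'}(A\cap B)$, so $|I|\le 50\,t^2\log^3 k$. Exactly as in Lemma~\ref{lemnew2}, the number of $F\in\ff$ meeting at least $5\log k$ sets of $\G'$ is bounded via \eqref{eqnew3}, and the count of $F\in\ff$ hitting $I\setminus S$ is at most $|I|\cdot|\ff|/(100(t\log k)^3)\le|\ff|/(2t)$ by the low-star bound from Claim~\ref{clanew3}. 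A double-counting would then identify some $G\in\G'$ disjoint from more than $(1-1/t)|\ff|$ sets of $\ff$, contradicting our assumption.

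The main obstacle is the contribution of $F\in\ff$ hitting $I\cap(S\setminus T)$: for $i\in S\setminus T$ the star size $|\ff(i)|$ can be much larger than $|\ff|/(100(t\log k)^3)$, so the naive estimate breaks down. I would resolve this by refining the selection of $\G'$ to force $I\cap(S\setminus T)=\emptyset$, namely by requiring each $y\in S\setminus T$ to appear in at most one set of $\G'$. Since $|S\setminus T|\le(10t\log k)^3$ is polylogarithmic in $k$ while $\ff(\bar T)$ is exponentially larger, a suitable tweak of the probabilistic argument of Lemma~\ref{lemnew1}---adding a `sparsity' constraint on the elements of $S\setminus T$, or first passing to the subfamily of sets in $\ff(\bar T)$ that avoid the heavy part of $S\setminus T$---can deliver such a $\G'$ of size $k^4$. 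With this refinement, $I\subseteq[n]\setminus S$, the low-star bound applies uniformly, and the remainder of the argument is verbatim from Lemma~\ref{lemnew2}, yielding the contradiction and hence $|S|\le t$.
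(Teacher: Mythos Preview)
Your plan contains a genuine gap, and the paper's argument is cleaner and proceeds differently.

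\textbf{The gap.} The moment you ``replace $S$ by a minimum set with the properties of Corollary~\ref{cornew1}'' you lose the one property you then use: that every $i\notin S$ has small star $|\ff(i)|\le |\ff|/(100(t\log k)^3)$. Claim~\ref{clanew3} only bounds the \emph{number} of big stars; it does not say that the set of big stars coincides with your new minimal $S$. So your estimate for $F\in\ff$ hitting $I\setminus S$ is unjustified. For the same reason, the obstacle you isolate is not $I\cap (S\setminus T)$ but $I\cap(\{\text{big stars}\}\setminus T)$, and your proposed refinement of $\G'$ does not control that. (Also note a bookkeeping slip: if $|\G'|=k^4$ then $|I|\le\binom{k^4}{2}\log k$, not $50t^2\log^3 k$; you need $|\G'|$ of order $t\log k$ to get that bound.)

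\textbf{What the paper does instead.} Keep $S$ exactly as produced in Corollary~\ref{cornew1}, namely the set of all big stars, and assume $|S|>t$. Remove a \emph{single} element $i\in S$ with the smallest $|\ff(i)|$, setting $S':=S\setminus\{i\}$. One checks (via inclusion--exclusion and $i$ being a big star) that
\[
\binom{n-2}{k-2}\ \le\ |\ff(\bar{S'})|\ \le\ \frac{3}{3t+2}\,|\ff|.
\]
Now run the Lemma~\ref{lemnew2} argument on $\ff(\bar{S'})$ with $|\G'|=10t^2\log k$. The key step that replaces your star--size bound is this: any $F\in\ff\setminus\ff(\bar{S'})$ already contains an element of $S'$, so if it also meets $I$ it contains a pair from $S'\times I$, whence the number of such $F$ is at most $|I|\,|S'|\binom{n-2}{k-2}$, which is polylogarithmic in $k$ times $\binom{n-2}{k-2}$ and hence $\le |\ff|/(20t)$. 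This completely sidesteps any discussion of which elements of $I$ are big stars. The price is that one must also discard $\ff(\bar{S'})$ from the double count, but $|\ff(\bar{S'})|\le \frac{3}{3t+2}|\ff|$ is small enough (precisely because $i$ was chosen with minimal star among at least $t+1$ big stars) for the averaging to yield a set of degree $\ge (1-1/t)|\ff|$.

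So the two approaches diverge: you try to shrink $S$ down to size $t$ and handle the large stars inside $I$ by tailoring $\G'$, while the paper keeps $S$ intact, peels off one element, and replaces the star--size estimate by the double--incidence bound $|I|\,|S'|\binom{n-2}{k-2}$.
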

\begin{proof}
  Indeed, assume that $|S|>t$ and take $i\in S$ such that \begin{equation}\label{eqnew4} %\min\big\{\frac 2{2t+1}, \frac 5{|S|}\big\}
  \frac 3{3t+2}|\ff|\ge |\ff(\bar S')|\ge {n-2\choose k-2},\end{equation} where $S':= S\setminus \{i\}$. It is easy to see that the right inequality is satisfied for any $i\in S$, while the left one is satisfied for $i$ such that $|\ff(i)|$ is minimal over $i\in S$, provided $|S|\ge 3$. (Both inequalities are obtained using inclusion-exclusion principle in a similar way as used in the proof of Claim~\ref{clanew3}.)

  Next, we apply an argument very similar to the one given in the proof of Lemma~\ref{lemnew2}. We find a family $\G'\subset \ff(\bar S')$ of $10 t^2 \log k$ sets with pairwise intersections of size at most $\log k$. The set $I = \bigcup_{A\ne B\in \G'} A\cap B$ satisfies $|I|\le 50 t^4 \log^3 k$. %with pairwise intersections of size at most $10^4\log^3 k$. The set $I = \bigcup_{A\ne B\in \G'} A\cap B$ has size at most $10^4 \log^3 k$.
  The number of sets from $\ff\setminus \ff(\bar S')$ intersecting $I$ is at most
  \begin{equation}\label{eqnew5} |I||S'|{n-2\choose k-2}\le 50 t^4 \log^3 k\cdot (10t\log k)^3{n-2\choose k-2}\le \frac{1}{20 t}|\ff|,
  \end{equation}
  where the last inequality holds provided $k$ is large enough.

  Next, we estimate the number of sets intersecting at least $5\log k$ sets from $\G'$. We do virtually the same calculation as in Lemma~\ref{lemnew2} %, except we bound the number of $20\log k$-element subsets in $\G'$ by $2^{H(0.2)\cdot 100 \log k}< 2^{80 \log k}$
  and get that there are at most
    \begin{equation*}
     {n\choose k}\cdot\sum_{\ell=5\log k}^{10t^2\log k}\Big(\frac {2e t^2 k^2}n\Big)^{\ell} \le {n\choose k}\cdot\begin{cases}
       2^{-5\log k}=k^{-5}, & \mbox{if } n\le t^2k^3 \\
       n^{-5} & \mbox{otherwise}
     \end{cases}
\end{equation*}
sets in ${[n]\choose k}$ with this property. This number is at most ${n-2\choose k-2}\le \frac 1{20t}|\ff|$ in any case.

We conclude that at least $1-\frac 3{3t+2}-\frac 1{10t}\ge 1-\frac 2{2t+1}$ of the sets from $\ff$ intersect at most $\frac 1{2t^2}$-fraction of sets from $\G'$. (Note that we had to exclude $\ff(i)$ itself, which made the biggest contribution to the complement.) Since $\big(1-\frac 2{2t+1}\big)\big(1-\frac 1{2t^2})>1-\frac 1t$, we again get by  simple double-counting that there is a set in $\G'$ that intersects fewer than $\frac 1t$-fraction of sets from $\ff$, which is a contradiction.
\end{proof}
Finally, it is not difficult to see that the condition $(10 t\log k)^7\le k$ is sufficient for the argument to go through. (And is only used in Claim~\ref{clanew3} and Lemma~\ref{lemnew3}.)

\section{Proof of Theorem~\ref{thmnew2}}
Let us first analyze the two possible cases: $|S| = 1$ and $|S| = 2$. Assume that $S = \{1\}$ in the first case and $S = \{1,2\}$ in the second case. We assume that $\ff$ minimizes $d(\ff)$ for fixed $|\ff|$ in either case.

  Assume that  $|S| = 1$ and  $|\ff|\le {n-1\choose k-1}+1$. Then  $\ff(\bar 1)$ contains at least one set $U$, while $|\ff(1)|\ge |\ff|-{n-4\choose k-4}$ due to Theorem~\ref{thmnew1}. At the same time, the number of sets from $\ff(1)$ intersecting $U$ is at most $|\mathcal H(1,U)|$. Thus,
\begin{equation}\label{eqs=1}
  |\ff|-|\mathcal H(1,U)|-{n-4\choose k-4}\le d(\ff).
\end{equation}
We can bound $d(|\ff|,n,k)$ from above using the following construction: %The upper bound is obtained by considering
take $\mathcal H(1,U)$ together with some other sets containing $1$ (so that we have $|\ff|$ sets in total). This gives
\begin{equation}\label{eqs=12}
  d(\ff)\le |\ff|-|\mathcal H(1,U)|.
\end{equation}

  Next, assume that $|S| = 2$. To analyze $d(\ff)$, we shall apply the following proposition. %to the Kneser graph $KG(n-2,k-1)$.
  \begin{prop}{\cite[Corollary 9.2.5]{AS}}\label{propal} Let $G = (V,E)$ be a $D$-regular $N$-vertex graph. Let $\lambda = \lambda (G)$ be the second largest absolute value of an eigenvalue of $G$. Then for any subsets $B,C\subset V$, where $|B| = bN$, $|C| = cN$, we have
  $$|e(B,C)-cbDN|\le \lambda \sqrt {bc}N.$$
  \end{prop}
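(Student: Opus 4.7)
The plan is to give the standard spectral proof of the expander mixing lemma. Let $A$ be the adjacency matrix of $G$, viewed as a real symmetric $N\times N$ matrix. Since $A$ is symmetric, the spectral theorem gives an orthonormal basis $v_1,\ldots,v_N$ of $\mathbb{R}^N$ consisting of eigenvectors of $A$, with real eigenvalues $\mu_1,\ldots,\mu_N$. Because $G$ is $D$-regular, the all-ones vector is an eigenvector with eigenvalue $D$; I would take $v_1 = N^{-1/2}\mathbf{1}$ so that $\mu_1 = D$, and one checks (via a standard Rayleigh quotient argument, or the Perron--Frobenius theorem) that $D$ is the largest eigenvalue in absolute value, so $|\mu_i|\le \lambda$ for all $i\ge 2$.

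The next step is to write $e(B,C)$ as a quadratic form. Letting $\mathbf{1}_B, \mathbf{1}_C$ denote the indicator vectors of $B$ and $C$, one has
\[ e(B,C) = \mathbf{1}_B^{T} A\, \mathbf{1}_C, \]
where each edge with one endpoint in $B$ and one in $C$ contributes $1$, and edges inside $B\cap C$ contribute $2$ (this matches the convention used in Alon--Spencer, and is what the subsequent application in the paper needs). Now expand
\[ \mathbf{1}_B = \sum_{i=1}^{N} \alpha_i v_i, \qquad \mathbf{1}_C = \sum_{i=1}^{N} \beta_i v_i. \]
From $v_1 = N^{-1/2}\mathbf{1}$ we read off $\alpha_1 = \langle \mathbf{1}_B, v_1\rangle = |B|/\sqrt{N} = b\sqrt{N}$ and similarly $\beta_1 = c\sqrt{N}$. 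Plugging this in,
\[ e(B,C) = \sum_{i=1}^{N} \mu_i \alpha_i \beta_i = D\alpha_1\beta_1 + \sum_{i=2}^{N} \mu_i \alpha_i \beta_i = bc D N + \sum_{i=2}^{N} \mu_i \alpha_i \beta_i. \]

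To finish, I would bound the error term using $|\mu_i|\le \lambda$ and Cauchy--Schwarz:
\[ \Bigl|\sum_{i=2}^{N} \mu_i \alpha_i \beta_i\Bigr| \le \lambda \sum_{i=2}^{N} |\alpha_i \beta_i| \le \lambda \Bigl(\sum_{i=2}^{N}\alpha_i^2\Bigr)^{1/2}\Bigl(\sum_{i=2}^{N}\beta_i^2\Bigr)^{1/2}. \]
By Parseval, $\sum_i \alpha_i^2 = \|\mathbf{1}_B\|^2 = |B| = bN$ and similarly $\sum_i\beta_i^2 = cN$, so dropping the $i=1$ terms only shrinks these sums and we obtain $\lambda\sqrt{bc}\,N$, which is the claimed bound.

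There is no real obstacle here; the proof is essentially three lines of linear algebra once the spectral setup is in place. The only point that deserves care is the convention for $e(B,C)$ when $B\cap C\ne\varnothing$ (which the quadratic-form identity handles automatically), and verifying that $D$ is indeed the eigenvalue of largest absolute value, so that $\lambda$ controls all other $|\mu_i|$. For the latter, one can cite Perron--Frobenius or simply observe that $\mathbf{x}^T A \mathbf{x}\le D\|\mathbf{x}\|^2$ via the inequality between arithmetic means on the endpoints of each edge.
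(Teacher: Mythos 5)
Your proof is correct: this proposition is not proved in the paper at all but simply quoted from Alon--Spencer (Corollary 9.2.5), and your spectral argument --- decomposing $\mathbf{1}_B,\mathbf{1}_C$ in an orthonormal eigenbasis, extracting the $bcDN$ main term from the all-ones eigenvector, and bounding the rest by Cauchy--Schwarz --- is exactly the standard proof given there, including the ordered-pair convention for $e(B,C)$ that the paper's application relies on. No gaps; nothing further to add.
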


  Let $\ff_1,\ff_2$ be defined as follows: $\ff_i:=\{U\setminus \{i\}: U\in \ff, U\cap [2] = i\}$.
Both $\ff_1,\ff_2$ can be seen as subsets of $KG(n-2,k-1)$ (on vertex set $[3,n]$). We note that $KG(n-2,k-1)$ is regular of degree ${n-k-1\choose k-1}$, and $\lambda= \lambda(KG(n-2,k-1)) = {n-k-2\choose k-2}$. Let $\delta_i$ be the average degree of a vertex in $\ff_i$. Applied to our situation, the proposition above implies the following.
\begin{prop}
Assume that $\{i,j\} = \{1,2\}$. Then
\begin{equation}\label{eq9}
  \delta_i \ge \frac{|\ff_j|{n-k-1\choose k-1}}{{n-2\choose k-1}}- {n-k-2\choose k-2}\sqrt{\frac{|\ff_j|}{|\ff_i|}}.
\end{equation}
Moreover,
\begin{equation}\label{eq17}d(\ff)\ge \frac{\frac{|\ff_1|+|\ff_2|}2{n-k-1\choose k-1}}{{n-2\choose k-1}} - {n-k-2\choose k-2}.\end{equation}
\end{prop}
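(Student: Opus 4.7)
The plan is to apply the Proposition (the expander mixing lemma) to the Kneser graph $KG(n-2,k-1)$ on vertex set $[3,n]$, whose parameters are $N={n-2\choose k-1}$ vertices, regularity $D={n-k-1\choose k-1}$, and second-largest absolute eigenvalue $\lambda={n-k-2\choose k-2}$ (a standard computation from the Hoffman spectrum of Kneser graphs). Both $\ff_1$ and $\ff_2$ are by construction subsets of this vertex set, and disjointness in $[3,n]$ corresponds precisely to disjointness in $[n]$ after adjoining the appropriate element of $[2]$.

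For \eqref{eq9}, I would simply take $B=\ff_i$ and $C=\ff_j$ in the Proposition. With $b=|\ff_i|/N$ and $c=|\ff_j|/N$ it yields
\begin{equation*}
\Bigl|e(\ff_i,\ff_j)-\tfrac{|\ff_i||\ff_j|D}{N}\Bigr|\le\lambda\sqrt{|\ff_i||\ff_j|}.
\end{equation*}
Since $\delta_i=e(\ff_i,\ff_j)/|\ff_i|$ by definition of the average bipartite degree from $\ff_i$ into $\ff_j$, dividing by $|\ff_i|$ gives \eqref{eq9} at once.

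For \eqref{eq17}, the bridge from the bipartite picture back to $d(\ff)$ is the observation that any $F\in\ff$ with $F\cap[2]=\{i\}$ has $KG(\ff)$-degree at least the number of $G\in\ff$ with $G\cap[2]=\{j\}$ disjoint from $F$, which is exactly the bipartite degree (inside $KG(n-2,k-1)$) of $F\setminus\{i\}$ into $\ff_j$. Hence $d(\ff)\ge\max(\delta_1,\delta_2)\ge(\delta_1+\delta_2)/2$, and summing the two instances of \eqref{eq9} and dividing by $2$ produces the main term $\tfrac{(|\ff_1|+|\ff_2|)D}{2N}$ of \eqref{eq17}.

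The main technical obstacle is the $\lambda$ coefficient: naive averaging of \eqref{eq9} gives $\tfrac{\lambda}{2}\bigl(\sqrt{|\ff_1|/|\ff_2|}+\sqrt{|\ff_2|/|\ff_1|}\bigr)$ which, by AM-GM, is at least $\lambda$ with equality only when $|\ff_1|=|\ff_2|$. To recover the cleaner single $\lambda$ in \eqref{eq17}, I would instead apply the Proposition to $B=C=\ff_1\cup\ff_2$ (as subsets of $[3,n]$): this gives $e(B,B)\ge|B|^2D/N-\lambda|B|$, hence an average $G[B]$-degree of at least $|B|D/N-\lambda\ge\tfrac{(|\ff_1|+|\ff_2|)D}{2N}-\lambda$, using $|B|\ge\max(|\ff_1|,|\ff_2|)\ge\tfrac{|\ff_1|+|\ff_2|}{2}$. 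The concluding step is to select a maximum-$G[B]$-degree vertex $H\in\ff_1\cup\ff_2$ and check, via the bipartite-degree observation above, that at least one of the lifts $H\cup\{1\},H\cup\{2\}\in\ff$ inherits this degree in $KG(\ff)$; this is immediate when $H\in\ff_1\cap\ff_2$ and is the main case-analysis to verify otherwise.
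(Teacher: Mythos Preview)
Your derivation of \eqref{eq9} is correct and matches the paper: apply the expander mixing lemma to $B=\ff_i$, $C=\ff_j$ in $KG(n-2,k-1)$ and divide by $|\ff_i|$.

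Your argument for \eqref{eq17}, however, has a genuine gap in the lifting step. Suppose $H\in B=\ff_1\cup\ff_2$ has large $G[B]$-degree, say $d_B(H)$ neighbours $H'\in B$. If $H'\in\ff_1$ (only), then the lift $H'\cup\{1\}$ contains $1$ and is therefore \emph{not} disjoint from $H\cup\{1\}$; it only witnesses an edge to $H\cup\{2\}$. Symmetrically, $H'\in\ff_2$ only helps $H\cup\{1\}$. Hence, even in the ``immediate'' case $H\in\ff_1\cap\ff_2$, you obtain merely
\[
\deg_{KG(\ff)}(H\cup\{1\})+\deg_{KG(\ff)}(H\cup\{2\})\;\ge\;|N_B(H)\cap\ff_2|+|N_B(H)\cap\ff_1|\;\ge\;d_B(H),
\]
so the maximum of the two is only $\ge d_B(H)/2$, losing a factor of $2$. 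When $H\in\ff_1\setminus\ff_2$ (the case you defer), only $H\cup\{1\}$ is available and its degree is bounded below by $|N_B(H)\cap\ff_2|$, which can be much smaller than $d_B(H)$. So the ``case analysis'' you postpone cannot be completed as stated.

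The paper proceeds differently and uses an extra ingredient you did not invoke: the a~priori lower bound $|\ff_1|,|\ff_2|\ge k^{-1/2}|\ff|\gtrsim\sqrt{k}\,{n-k-2\choose k-2}$ coming from the ``moreover'' clause of Theorem~\ref{thmnew1}. Assuming w.l.o.g.\ $|\ff_1|\ge|\ff_2|$, one uses only \eqref{eq9} with $i=2$ (so the main term $\tfrac{|\ff_1|D}{N}$ already dominates $\tfrac{(|\ff_1|+|\ff_2|)D}{2N}$) and checks that, for fixed $|\ff_1|+|\ff_2|$, the bound $\tfrac{|\ff_1|D}{N}-\lambda\sqrt{|\ff_1|/|\ff_2|}$ is minimised at $|\ff_1|=|\ff_2|$. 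Concretely, writing $|\ff_1|-|\ff_2|=(\sqrt{|\ff_1|}-\sqrt{|\ff_2|})(\sqrt{|\ff_1|}+\sqrt{|\ff_2|})$, the required inequality reduces to
\[
\sqrt{|\ff_2|}\bigl(\sqrt{|\ff_1|}+\sqrt{|\ff_2|}\bigr)\;\ge\;\frac{2\lambda N}{D}\;=\;\frac{2(n-k)}{\,n-k-1\,}{n-2\choose k-2},
\]
which follows from the lower bound on $|\ff_2|$. Without this size information on $|\ff_2|$, \eqref{eq17} need not hold, so some such input is essential.
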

\begin{proof} The first part is just an application of Proposition~\ref{propal}. Next, assume that $|\ff_1|\ge |\ff_2|.$
It is not difficult to see that, for fixed $|\ff_1|+|\ff_2|$, the bound on $\delta_2$ is the smallest when $|\ff_1|=|\ff_2|$. Indeed, recall that $|\ff|\ge |\D|$, and thus $|\ff_1|,|\ff_2|\ge \frac 13 |\D|\ge \sqrt k{n-k-2\choose k-2}$ by Theorem~\ref{thmnew1}. The first term on the right hand side of \eqref{eq9} is essentially $C|\ff_1|$, where $C>\frac 12$, while the absolute value of the second term grows slower than $c|\ff_1|$ with $c\le {n-k-2\choose k-2}/|\ff_2|\le k^{-0.5}$. Thus, we get \eqref{eq9}.
\end{proof}
%Given that $|\ff_1|+|\ff_2| = |\ff|-{n-2\choose k-2}$, we get the following bound:

We note that \begin{equation}\label{eq667} {n-2\choose k-2}\cdot {n-k-1\choose k-1}/{n-2\choose k-1} = \frac {k-1}{n-k}{n-k-1\choose k-1}<{n-k-2\choose k-2}.\end{equation}
Knowing that $|\ff(S)|\le {n-2\choose k-2}$ and $|\ff(\bar S)|\le {n-4\choose k-4}$, from \eqref{eq17} and \eqref{eq667} we derive that
\begin{multline}\label{eqs=2}d(\ff)\ge \frac 12\frac{(|\ff|-|\ff(\bar S)|-|\ff(S)|){n-k-1\choose k-1}}{{n-2\choose k-1}} - {n-k-2\choose k-2}\\ \ge \frac 12\frac{|\ff|{n-k-1\choose k-1}}{{n-2\choose k-1}} - \frac 32{n-k-2\choose k-2}-\frac 12{n-4\choose k-4}\ge 0.4|\ff|.\end{multline}

We go on to the proof of different parts of Theorem~\ref{thmnew2}. \\

1. We first note that $|S| = 1$. Indeed, \eqref{eqs=1} implies that $d(\mathcal E_i)<\frac 13|\mathcal E_i|$ in this range, while \eqref{eqs=2} gives $d(\ff)\ge 0.4|\ff|$.

Next, assume that there are at least $k+i+1$ elements $y\in [2,n]$ such that $|\ff(\{1,y\})|\ge {n-2\choose k-2}- {n-4\choose k-4}$. Then $|\ff|\ge {n-1\choose k-1}-{n-k-i-2\choose k-1}-(k+i+1){n-4\choose k-4}>{n-1\choose k-1}-{n-k-i-1\choose k-1}+{n-4\choose k-4}$, a contradiction with the assumption on the size of $\ff$. Consequently, there are at most $k+i$ such elements.

In particular, if $|\bigcup_{U\in \ff(\bar 1)} U|>k+i$, then there is a set $U\in \ff(\bar 1)$ such that one of its elements, say, $y$, is contained in  less than ${n-2\choose k-2}- {n-4\choose k-4}$ sets containing $1$. Then the degree of this set is at least $|\ff|-{n-1\choose k-1}+{n-k-1\choose k-1}+{n-4\choose k-4}-|\ff(\bar 1)|\ge |\ff|-{n-1\choose k-1}+{n-k-1\choose k-1}$. This is more than the maximum degree in a family of size $|\ff|$ having just one set $U$ not containing $1$ and containing all sets containing $1$ and passing through $U$.

Finally, assume that $|\ff| = |\mathcal E_i|$. If $|\ff(\bar 1)|<{k+i\choose k}$ then, for any $U\in \ff(\bar 1)$, the number of sets not intersecting it is at least $|\mathcal E_i|- {n-1\choose k-1}+{n-k-1\choose k-1}-{k+i\choose k}+1$, which is bigger than the right hand side in \eqref{eqdei}. Thus, $|\ff(\bar 1)| = {k+i\choose k}$ and, by the first part, must have the form ${Y\choose k}$ for some $Y\subset [2,n]$ of size $k+i$. % On the other hand, if $\ff(\bar 1)$ has structure different from ${X\choose k}$ for some $X$ of size $k+i$, then we can argue as above to show that it cannot minimize maximum degree.
\\

2, 3. In the cases 2, 3 of Theorem~\ref{thmnew2}, we have $|S| = 2$. Indeed, if $|S| = 1$ then \eqref{eqs=1} implies that $d(\ff)\ge \frac 12 |\ff|$. Next, if we have $|\ff|>4|\D|$ and $\ff(\bar \emptyset) \ne\emptyset$, then, for any set $U\in \ff(\bar S)$, there are at most $2|\D|-{n-3\choose k-3}$ sets from $\ff\setminus \ff(\bar S)$ intersecting it, and thus $d(\ff)\ge |\ff|-2|\D|+{n-3\choose k-3}-{n-4\choose k-4}>\frac 12 |\ff|$.

\section{Proof of Theorem~\ref{thm3}}

  Take a family $\ff$ satisfying the restrictions of the theorem and such that  $d(\ff) = d(|\ff|,n,k)$. The conclusion of Theorem~\ref{thmnew2} holds, i.e., we must have two elements, say,  $1,2$ such that $\ff(\bar{[2]}) = \emptyset$. We may w.l.o.g. assume that $\ff([2]) = {[3,n]\choose k-2}$. Next, for $i=1,2$ consider the families $\ff_i:=\{F\setminus \{i\}: F\in \ff, F\cap [2] = \{i\}\}.$ Inequality \eqref{eq17} is valid in our situation, which, together with $|\ff_1|+|\ff_2| = |\ff|-{n-2\choose k-2}$, concludes the proof of the first part of \eqref{eq10}.

  %These families are $(k-1)$-uniform.

 % We note that $$\big||\ff_1|-|\ff_2|\big|\le 2|\D|.$$ Indeed, if this is false and $|\ff_1|>|\ff_2|$, then any set $F\in \ff_2$ is disjoint from at least $|\ff_1|-|\D|>(|\ff_1|+|\ff_2|)/2$ sets, and then a family $\ff'$ with equally sized $\ff'_1$ and $\ff'_2$ has smaller maximum degree.

%Given that $|\ff_1|+|\ff_2| = |\ff|-{n-2\choose k-2}$, we get the following bound:

%$$d(\ff)\ge \frac{\frac{|\ff|-{n-2\choose k-2}}2{n-k-1\choose k-1}}{{n-2\choose k-1}} - {n-k-2\choose k-2}.$$

To get to the second form of the bound in \eqref{eq10}, we use the same calculations as in \eqref{eqs=2} together with the fact that $|\ff(\bar S)| =0$, and get % and thus
$$
d(\ff)\ge \frac{|\ff|{n-k-1\choose k-1}}{2{n-2\choose k-1}} - \frac 32{n-k-2\choose k-2}.$$ Finally, we note that $\frac{{n-k-1\choose k-1}}{{n-2\choose k-1}} \ge \Big(\frac{n-2k}{n-k}\Big)^{k-1} = (1-\frac k{n-k})^{k-1}\ge (1-\frac {k+1}{n})^{k-1}\ge 1-\frac {(k+1)(k-1)}n\ge 1-\frac {k^2}n$. We have used the fact that $n\ge k^2+k$ in the second inequality.

\section{Optimality of Theorem~\ref{thm3}}\label{secopt}
Fix some $s>1000 k$ and $n>10k^3$ and consider the following family $\G$: $$\G:=\big\{A\in {[n]\choose k}: \{1,2\}\subset A\big\}\cup\big\{A\in {[n]\choose k}: |A\cap \{1,2\}|=1, |A\cap [3,s+2]|=1\big\}.$$
It is not difficult to see that $|\G| = {n-2\choose k-2}+2s{n-s-2\choose k-2}$. Substituting this into the bound \eqref{eq10}, we get that
\begin{align}d(\G)\ge& \frac{s {n-s-2\choose k-2}{n-k-1\choose k-1}}{{n-2\choose k-1}} - {n-k-2\choose k-2} = \frac{(s-1) {n-s-2\choose k-2}{n-k-1\choose k-1}}{{n-2\choose k-1}}-\Theta\Big(s{n-3\choose k-3}\Big)\notag\\
\label{eq11}\ge & (s-1) {n-s-k\choose k-2}-\Theta\Big(s{n-3\choose k-3}\Big),\end{align}
where $\Theta$ stands for a constant, independent of $s,k,n$. In the last transition, we have used the result of the following calculation:\begin{small}
$$\frac{{n-s-2\choose k-2}{n-k-1\choose k-1}}{{n-2\choose k-1}} = \prod_{i=1}^{k-1}\frac{{n-k-i}}{{n-1-i}}{n-s-2\choose k-2} = \prod_{i=1}^{k-1}\frac{{n-k-i}}{{n-1-i}} \prod_{j=0}^{k-3}\frac{{n-s-2-j}}{{n-s-k-j}} {n-s-k\choose k-2} $$
$$\ge  \prod_{i=1}^{k-1}\frac{{n-k-i}}{{n-1-i}} \prod_{j=0}^{k-4}\frac{{n-s-2-j}}{{n-s-1-k-j}} {n-s-k\choose k-2}\ge \prod_{i=1}^{2}\frac{{n-k-i}}{{n-1-i}}  {n-s-k\choose k-2}\ge \Big(1-\frac {2k}n\Big){n-s-k\choose k-2}, $$\end{small}
which implies
$$\frac{{n-s-2\choose k-2}{n-k-1\choose k-1}}{{n-2\choose k-1}}-{n-s-k\choose k-2}\ge -\frac{2k}n{n-s-k\choose k-2}\ge -4{n-3\choose k-3},$$
for $k\ge 4$.

At the same time, it is easy to see that each set from $\G$ that intersects $[2]$ in $1$ element has the same degree, namely $(s-1){n-s-k\choose k-2}.$ This is the maximum degree of $KG(\G)$. Compare this with \eqref{eq11}. Note that $s{n-3\choose k-3}\ll {n-s-k\choose k-2}$ for $s\ll n/k$.
\\

{\sc Acknowledgements. } The research of the first author was partially supported by the National Research, Development and Innovation Office - NKFIH under the grant K 132696. The authors acknowledge the financial support from the Ministry of Education and Science of the Russian Federation in the framework of MegaGrant no 075-15-2019-1926.


\begin{thebibliography}{10}

\bibitem{AS} N. Alon and J. Spencer, {\it The probabilistic method}, Wiley--Interscience Series in Discrete Mathematics and Optimization, Second Edition, 2000.



\bibitem{Bal} J. Balogh, S. Das, H. Liu, M. Sharifzadeh, and T. Tran, {\it Structure and Supersaturation for Intersecting Families,} Electronic J. Comb 26 (2019), N2, P2.34

\bibitem{DGS} S. Das, W. Gan and B. Sudakov, {\it The minimum number of disjoint pairs in set systems and related problems,} Combinatorica 36 (2016), 623--660.

\bibitem{EKR} P. Erd\H os, C. Ko and R. Rado, \textit{Intersection theorems for systems of finite sets}, The Quarterly Journal of Mathematics 12 (1961), N1, 313--320.

\bibitem{FKR} P. Frankl, Y. Kohayakawa and V. R\"odl, {\it A note on supersaturated set systems,} European J. Combin. 51 (2016), 190--199.
\bibitem{HM} A.J.W. Hilton and E.C. Milner, \textit{Some intersection theorems for systems of finite sets}, Quart. J. Math. Oxford 18 (1967), 369--384.

\bibitem{Hua} H. Huang, {\it Induced subgraphs of hypercubes and a proof of the Sensitivity Conjecture,} Annals of Mathematics, 190 (2019), N3, 949--955.

\bibitem{KKK} G. O. H. Katona, G. Y. Katona and Z. Katona, {\it Most probably intersecting families of subsets,}
Comb. Prob. Comput. 21 (2012), 219--227.

\bibitem{Knes} M. Kneser, \textit{Aufgabe 360}, Jahresbericht der Deutschen Mathematiker-Vereinigung 2 (1955), 27.

\bibitem{Lova} L. Lovasz, \textit{Kneser's conjecture, chromatic number, and homotopy}, J. Comb. Theory Ser. A, 25 (1978), N3, 319--324.


\end{thebibliography}
\end{document}